\newcommand{\ii}{{\rm i}}
\newcommand{\dd}{{\rm d}}
\newtheorem{thm}{Theorem}[section] 
\newtheorem{lem}[thm]{Lemma}
\newtheorem{ex}[thm]{Example}
\newtheorem{remark}[thm]{Remark}
\title{Preconditioned discontinuous Galerkin method and convection-diffusion-reaction problems with guaranteed bounds to resulting spectra}
\author[1,3]{Liya Gaynutdinova}
\author[2]{Martin Ladeck\' y}
\author[1]{Ivana Pultarov\' a}
\author[1]{Miloslav Vlas\' ak}
\author[3]{Jan Zeman}
\date{}
\affil[1]{Department of Mathematics, Faculty of Civil Engineering, CTU in Prague}
\affil[2]{Department of Microsystems Engineering, University of Freiburg}
\affil[3]{Department of Mechanics, Faculty of Civil Engineering, CTU in Prague}
\begin{document}

\maketitle

 {\bf Abstract.} This paper focuses on the design, analysis and implementation of a new preconditioning concept for linear second order partial differential equations, 
 including the convection-diffusion-reaction problems
 discretized by Galerkin or discontinuous Galerkin methods.
  We expand on the approach introduced by Gergelits et 
al.~and adapt it to the more general settings,
 assuming that 
 both the original and preconditioning matrices are
 composed of sparse matrices of very low ranks, representing  local contributions to the global 
 matrices. When applied to a symmetric problem, the method provides bounds to all individual 
 eigenvalues of the preconditioned matrix. 
 We show that this preconditioning strategy works
 not only for Galerkin discretization, but also for the discontinuous Galerkin discretization, where local contributions are associated with  individual edges 
 of the triangulation. 
 In the case of non-symmetric problems,
 the method yields guaranteed bounds to real and imaginary parts of the resulting eigenvalues.
 We include some numerical experiments illustrating the method and its
 implementation, showcasing its effectiveness for the two 
 variants of discretized (convection-)diffusion-reaction
 problems.

 {\bf Key words.} Second order PDEs; 
 preconditioning; Galerkin method; discontinuous Galerkin method; convection-diffusion-reaction problems; skew-symmetric matrix.

{\bf AMS subject classifications.} 65N22, 65N30, 65N12, 65F08.

\section{Introduction}

We introduce a preconditioning and eigenvalue estimation 
approach for solving systems of linear equations that stem from two distinct problems:
(a)  discretization of diffusion-reaction problems by discontinuous Galerkin (DG) method 
and (b) discretization of
convection-diffusion-reaction problems
by conforming Galerkin method.
While the former problem results in a system of linear equations with a real symmetric matrix, the 
latter may yield a non-symmetric matrix.
The main purpose of this paper
is to show that the special concept of preconditioning
introduced recently by Gergelits et al.~in~\cite{G}
and further developed in~\cite{GNS2020,LadeckyPZ2020,PuLa2021}
is applicable to an even broader area of 
second order partial differential equations (PDE) and
discretization schemes.
This view of preconditioning was motivated by paper by Nielsen et al.~\cite{NielsenTH2009}, where the inverse of Laplacian was
used as preconditioner to the equation $-\nabla\cdot(k\nabla u)=f$
with a continuous scalar function $k$. It was shown that the 
rank of $k$ is involved in the spectrum of the 
preconditioned operator.
Gergelits et al.~transferred this idea to the
discretized forms of operators~\cite{G} and also refined it by new theoretical results~\cite{GergelitsNS2022}.
In~\cite{GNS2020} this approach was generalized to 
problems with tensor material coefficients.
In~\cite{LadeckyPZ2020,PuLa2021} 
the algorithm was modified and applied to the solution of linear elasticity equation and to the discretization with the finite difference method and the stochastic Galerkin method.
This preconditioning strategy was successfully employed in~\cite{LadeckyLFPPJZ2023, LeuteLFJPZJP2022} to address problems arising from image-based homogenization of heterogeneous materials. Additionally, we highlighted its close connection with a class of fast homogenization algorithms proposed by Moulinec and Suquet~\cite{Moulinec1994,Moulinec1998} and widely used in the computational mechanics of materials community; see recent reviews~\cite{Schneider2021,Lucarini2022,Gierden2022} for additional details.

Let us note that other preconditioning methods use the idea of inspecting the preconditioned problem locally in order to obtain the 
whole picture of the spectrum.
The so-called strengthened 
Cauchy-Schwarz-Bunyakowski constant used in 
algebraic multilevel preconditioning can be obtained
from the local properties of the 
(possibly nonconforming) discretization and the data,
see e.g.~\cite{A,BMN,EijkhoutV}.
The resulting spectrum of a preconditioned matrix can be also estimated by 
the local Fourier analysis, which uses the so-called symbols to describe the asymptotic spectral distribution and  effectivity of some preconditioners (such as multigrid), see e.g.~\cite{Donatelli2016}.
Similar approach can be found in~\cite{Hemker2004},
where the local Fourier analysis is used for 
a block-Toeplitz matrix arising from a block relaxation multigrid method for the interior penalty DG method.

The basic assumption of our method of preconditioning and eigenvalue estimation is that the matrix of the 
discretized problem and the preconditioning matrix are obtained as sums of the same number of sparse matrices 
of a very low rank, and that corresponding pairs of matrices 
(of the problem and of the preconditioner) 
have the same or similar kernels. 
This assumption is naturally satisfied for
finite element method (FEM),
finite difference or stochastic Galerkin method
applied to elliptic problems,
for example; see~\cite{PuLa2021} for more details.
Our goal is to show how this basic concept 
extends to the two settings not considered 
for this type of preconditioning before.

The first problem addressed here is the DG method for solving an elliptic PDE yielding a symmetric system matrix. 
More specifically, we consider the symmetric interior penalty
DG (SIPG) method, which allows preserving the symmetry of the continuous problem. SIPG method brings the original idea of penalizing irregular boundary conditions, see e.g.~PDE analysis~\cite{Lions} or applications to FEM~\cite{Babuska}, to all inter-element connections, cf.~\cite{D-D}. 
The DG approach in general weakens the inter-element connections in comparison with the more traditional FEM, which gives it a number of advantages, e.g.~easier 
implementation of adaptive methods. Moreover, the increased number of degrees of freedom (DOFs) along element boundaries may be used for the stabilization, which results in more robust schemes. On the other hand, DG is often considered as rather computationally expensive in comparison with FEM due to the increased number of DOFs, especially for lower degree polynomial approximations. For more complete overview and analysis of DG, see the seminal paper \cite{Brezzi2002} or the books~\cite{D-E,DolejsiFeistauer2015} and the references cited therein.

The second problem involves using conforming Galerkin discretization to solve a second order partial differential equation that contains a convective term. This term makes the system of linear equations non-symmetric.
Although DG is commonly used to solve problems that involve convection (such as Navier-Stokes equations~\cite{DolejsiFeistauer2015}), since DG can be used as a stabilization technique for convective terms, we specifically address the use of conforming Galerkin discretization and DG separately. We do this to highlight the unique contributions that each technique brings to our preconditioning and spectral estimation method.
In the DG scheme we do not associate the local contribution matrices with integrals over individual elements,
as it was done in~\cite{G,LadeckyPZ2020,PuLa2021}. 
Instead, we associate them with the contributions given by integrals
over particular edges, where the amount of element-wise integrals
corresponding to neighboring elements is added.
In Galerkin discretization of a convective term, we deal with symmetric and 
skew-symmetric parts of the system matrix separately
and estimate the real and imaginary parts of the
resulting spectrum.

Probably the most popular preconditioners 
for Galerkin and DG methods are various types of multilevel and multigrid methods.
Multigrid can be used for DG for self-adjoint problems~\cite{Dobrev2006,Hartmann2009,Kronbichler2018},
for a non-conforming mesh~\cite{Brix2008,DeDiosZiatanov2009}
with a condition number estimation using the stable splitting
technique. For a higher order DG discretization 
$p$-multigrid method can be used~\cite{ThieleR2022}.
Additive Schwarz method was presented e.g.~in~\cite{LorcaKanschat2021}.
See, e.g.~\cite{Bastian2019} for multigrid for convection dominated problems.
It is important to emphasize that our suggested type of preconditioning can be competitive with other preconditioning methods, such as multigrid, only in some particular cases.
Specifically, in a case when the solution
of a system with a preconditioning matrix is very cheap; or, if the fast discrete Fourier transform can be employed; or, if it is worth to factorize and store 
the inverse of the preconditioner, e.g., in the case when many 
similar problems have to be solved and preconditioned with
the same matrix, which can be used in time dependent 
problems~\cite{Francolini2020}.

Additional benefit of a preconditioner with 
guaranteed spectral bounds on the system matrix is 
that it can be used for a~posteriori error estimates. 
Let us assume the problem $\mathsf{Ax=b}$ with 
the exact solution $\mathsf x$ and some approximate solution $\widetilde{\mathsf{x}}$. Usually, the most desired and 
physically appropriate norm of the error 
$\mathsf{e}=\mathsf{x}-\widetilde{\mathsf{x}}$
is the energy norm $\Vert \mathsf{e}\Vert_{\mathsf{A}}=
\sqrt{\mathsf{e}^T\mathsf{Ae}}$.
However, the residual $\mathsf{r=b-A}\widetilde{\mathsf{x}}=\mathsf{A}(\mathsf{x}-\widetilde{\mathsf{x}})$ is usually only accessible during the computation. Assuming 
$\mathsf{A}$ and $\mathsf{P}$ symmetric and positive definite (s.p.d.) and using some guaranteed bounds to the spectrum
$\sigma( \mathsf{P}^{-1}\mathsf{A})\in [c_1,c_2]$, $c_1>0$,
we obtain 
\begin{equation}\nonumber
\frac{1}{c_2}\,{\mathsf{r}^T\mathsf{P^{-1}r}}\le 
\Vert \mathsf{e}\Vert^2_{\mathsf{A}}=
{\mathsf{r}^T\mathsf{A}^{-1}\mathsf{r}}\le
\frac{1}{c_1}\,{\mathsf{r}^T\mathsf{P^{-1}r}}.
\end{equation}
Another strategy of fully computable a~posteriori error estimates can be found e.g.~in~\cite{AinsworthFu2028}.

The paper is organized in the following way. The next section 
contains theoretical basics of our method.
We recall the main proof of lower and upper bounds to all
eigenvalues of a preconditioned problem
when both matrices are s.p.d.~and are obtained as sums 
of sparse and low
rank matrices. Then we derive bounds for the spectrum of 
preconditioned non-symmetric matrices. Section~\ref{secDG}  
focuses on the DG method for 
a diffusion-reaction equations. Section~\ref{secG} focuses
on convection-diffusion-reaction equations.
In both sections we present algorithms and numerical examples.
A short discussion and some open problems conclude the paper.

Linear algebra objects, vectors and matrices, are denoted by
the sans serif font, e.g.~$\mathsf v$, $\mathsf A$. 
We do not distinguish
between scalar, vector or tensor variables and functions, e.g.~$x$, $u$, $a$; the meaning is always 
clear from the context.
For descriptive subscripts and superscripts like $\circ\,_{\text{max}}$, $\circ\,_{\text{loc}}$, $\circ\,_{\text{dof}}$, we use the roman font.
We abuse the notation when by the condition number $\kappa(\mathsf{P^{-1}A})$ of the matrix $\mathsf{P^{-1}A}$
we mean the ratio of maximum and minimum eigenvalues 
of $\mathsf{P^{-1}A}$. In fact, by the condition number
we mean $\kappa(\mathsf{P^{-\frac{1}{2}}AP^{-\frac{1}{2}}})$.
The abbreviations used in the following 
are: PDE (partial differential equation),
DG (discontinuous Galekin method),
SIPG (symmetric interior penalty DG),
CG (conjugate gradient method), GMRES (generalized minimal residual method),
FEM (finite element method), DOF (degree of freedom), 
s.p.d.~(symmetric positive definite),
and s.p.s.d.~(symmetric positive semi-definite).

\section{Eigenvalue bounds for preconditioned 
symmetric and non-symmetric matrices}

In this section we present a new general method which computes 
spectral bounds for a real valued matrix $\mathsf{A}+\mathsf{B}$, $\mathsf{A}$ symmetric and 
$\mathsf{B}$ skew-symmetric,
preconditioned by an s.p.d.~matrix $\mathsf{P}$.
We assume that all matrices $\mathsf{A},\mathsf{B},\mathsf{P}\in\mathbb{R}^{N\times N}$ are obtained as sums of 
matrices $\mathsf{A}_k,\mathsf{B}_k,\mathsf{P}_k\in\mathbb{R}^{N\times N}$, $k=1,\dots,N_{\rm loc}$, such that their kernels fulfill
\begin{equation}\nonumber
{\rm Ker}(\mathsf{A}_k)\subseteq {\rm Ker}(\mathsf{P}_k),\quad
{\rm Ker}(\mathsf{P}_k)\subseteq {\rm Ker}(\mathsf{B}_k),\quad k=1,\dots,N_{\rm loc}.
\end{equation}
The matrices $\mathsf{A}_k,\mathsf{B}_k,\mathsf{P}_k$
can represent in some sense local (with respect to the 
physical solution domain) contributions to the whole system.
Such a setting can be found, for example,
in composing matrices of FEM or of a finite difference or finite volume methods.
Some applications are shown in~\cite{G,LadeckyPZ2020,PuLa2021}, 
new applications are introduced in Sections~\ref{secDG} and~\ref{secG}.

Let us denote the spectrum of a matrix $\mathsf{M}$ by $\sigma(\mathsf{M})$.
It is well known that any real symmetric matrix $\mathsf{A}$ has only real eigenvalues. Then let us denote by $\lambda_{\rm min}(\mathsf{A})$
and $\lambda_{\rm max}(\mathsf{A})$ the minimal and maximal eigenvalues of $\mathsf A$.
On the other hand, all eigenvalues of a skew-symmetric matrix $\mathsf{B}$ have the real parts equal to zero.
Then let us denote by 
$\lambda_{\rm im,max}(\mathsf{B})$ the maximal imaginary part of the eigenvalues of a skew-symmetric matrix $\mathsf{B}$.
Notice that their minimal imaginary part is $-\lambda_{\rm im,max}(\mathsf{B})$.
In the following, we will use the notation
$\mathsf{P}^{-1}$ for the inverse of an s.p.d.~matrix 
$\mathsf P$ or for its pseudoinverse if $\mathsf{P}$ is singular; see e.g.~\cite{Golub}.

In the next lemma we recall some well known results.  Since they are important for the subsequent derivations, we also present the proof.
\begin{lem}\label{lem1}
Let $\mathsf{A}$ be symmetric and $\mathsf B$ skew-symmetric matrices, 
$\mathsf{A},\mathsf{B}\in\mathbb{R}^{N\times N}$. Let $\mu+\ii \xi$ be an eigenvalue of 
$\mathsf{A}+\mathsf{B}$, $\mu,\xi\in\mathbb{R}$. Then 
\begin{equation}\nonumber
\mu\in[\lambda_{\rm min}(\mathsf{A}),\lambda_{\rm max}(\mathsf{A})],\quad
\xi\in[-\lambda_{\rm im,max}(\mathsf{B}),\lambda_{\rm im,max}(\mathsf{B})].
\end{equation}
\end{lem}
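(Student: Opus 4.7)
The plan is to diagonalize (in spirit) by expressing both $\mu$ and $\xi$ as Rayleigh quotients of Hermitian matrices, so that the standard min-max characterization finishes the argument. Since $\mathsf{A}+\mathsf{B}$ is real but generally non-symmetric, the eigenvector associated to $\mu+\ii\xi$ lives in $\mathbb{C}^N$, so I work throughout with the Hermitian inner product rather than the real one.

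First I would pick an eigenvector $\mathsf{v}\in\mathbb{C}^N$ of $\mathsf{A}+\mathsf{B}$ for the eigenvalue $\mu+\ii\xi$, normalized by $\mathsf{v}^*\mathsf{v}=1$. Multiplying the eigenvalue equation on the left by $\mathsf{v}^*$ yields
\begin{equation}\nonumber
\mu+\ii\xi \;=\; \mathsf{v}^*\mathsf{A}\mathsf{v} \;+\; \mathsf{v}^*\mathsf{B}\mathsf{v}.
\end{equation}
Next I would check the conjugation behavior of the two terms. Because $\mathsf{A}$ is real and symmetric, $\mathsf{A}^*=\mathsf{A}$, so $(\mathsf{v}^*\mathsf{A}\mathsf{v})^*=\mathsf{v}^*\mathsf{A}\mathsf{v}$ is real. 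Because $\mathsf{B}$ is real and skew-symmetric, $\mathsf{B}^*=-\mathsf{B}$, so $(\mathsf{v}^*\mathsf{B}\mathsf{v})^*=-\mathsf{v}^*\mathsf{B}\mathsf{v}$, i.e., $\mathsf{v}^*\mathsf{B}\mathsf{v}$ is purely imaginary. Matching real and imaginary parts therefore gives
\begin{equation}\nonumber
\mu \;=\; \mathsf{v}^*\mathsf{A}\mathsf{v}, \qquad
\xi \;=\; \mathsf{v}^*(-\ii\mathsf{B})\mathsf{v}.
\end{equation}

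The final step is to recognize both expressions as Rayleigh quotients of Hermitian matrices. The matrix $\mathsf{A}$ is already Hermitian, and its eigenvalues are $\lambda_{\min}(\mathsf{A}),\dots,\lambda_{\max}(\mathsf{A})$, so the Courant–Fischer bound $\mathsf{v}^*\mathsf{A}\mathsf{v}\in[\lambda_{\min}(\mathsf{A}),\lambda_{\max}(\mathsf{A})]$ for unit $\mathsf v$ gives the claimed bound on $\mu$. The matrix $-\ii\mathsf{B}$ satisfies $(-\ii\mathsf{B})^*=\ii\mathsf{B}^*=-\ii\mathsf{B}$, hence it is Hermitian. Its eigenvalues are obtained from the purely imaginary eigenvalues $\ii\eta_j$ of $\mathsf{B}$ by multiplying by $-\ii$, yielding the real numbers $\eta_j$; these are precisely the imaginary parts of the eigenvalues of $\mathsf{B}$, which come in $\pm$-pairs for a real skew-symmetric matrix, so they lie in $[-\lambda_{\mathrm{im,max}}(\mathsf{B}),\lambda_{\mathrm{im,max}}(\mathsf{B})]$. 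Applying the same Rayleigh quotient bound to $-\ii\mathsf{B}$ delivers the claimed bound on $\xi$.

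There is no genuinely hard step; the only thing to be careful about is not to use the real quadratic form $\mathsf{x}^T\mathsf{B}\mathsf{x}$, which would vanish identically for skew-symmetric $\mathsf{B}$ and destroy the argument. Working with the complex form $\mathsf{v}^*\mathsf{B}\mathsf{v}$ and noting the Hermitian character of $-\ii\mathsf{B}$ is the key technical point that makes the skew-symmetric part contribute a bona fide Rayleigh quotient to the imaginary axis.
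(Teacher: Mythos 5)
Your proof is correct, and it reaches the same two identities as the paper but by a cleaner route. The paper writes the eigenvector as $\mathsf{u}+\ii\mathsf{v}$ with $\mathsf{u},\mathsf{v}\in\mathbb{R}^N$ and extracts the real identities $\mathsf{u}^T\mathsf{Au}+\mathsf{v}^T\mathsf{Av}=\mu(\mathsf{u}^T\mathsf{u}+\mathsf{v}^T\mathsf{v})$ and $2\mathsf{u}^T\mathsf{Bv}=\xi(\mathsf{u}^T\mathsf{u}+\mathsf{v}^T\mathsf{v})$, which are exactly the real and imaginary parts of your single equation $\mu+\ii\xi=\mathsf{v}^*\mathsf{Av}+\mathsf{v}^*\mathsf{Bv}$; so for the real part the two arguments coincide in substance. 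The difference is in the bound on $\xi$: the paper normalizes $\Vert\mathsf{u}\Vert=1$, invokes the elementary maximization of $x/(1+d^2x^2)$ to reduce $2\vert\mathsf{u}^T\mathsf{Bv}\vert/(\Vert\mathsf{u}\Vert^2+\Vert\mathsf{v}\Vert^2)$ to a bilinear form over unit vectors, and then bounds that by the spectral radius of $\mathsf{B}$ (equivalently its largest singular value), which for a real skew-symmetric matrix equals $\lambda_{\rm im,max}(\mathsf{B})$. You instead observe that $-\ii\mathsf{B}$ is Hermitian with spectrum equal to the imaginary parts of $\sigma(\mathsf{B})$ and apply the complex Rayleigh-quotient bound directly; this avoids the normalization and the case distinction $\xi\ne 0$ entirely, at the modest cost of leaving the purely real setting that the paper deliberately stays in (its real bilinear identities are reused verbatim in the proof of Lemma 2.4, where $\mathsf{u}^T\mathsf{Pu}+\mathsf{v}^T\mathsf{Pv}$ replaces the Euclidean norm). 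Both arguments are complete; yours is shorter, the paper's is more directly recyclable for the preconditioned, patch-wise estimates later on.
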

\begin{proof}
Since $\mathsf{B}$ is skew-symmetric, we have for all $\mathsf{y}\in\mathbb{R}^N$
that $\mathsf{y}^T\mathsf{By}=\mathsf{y}^T\mathsf{B}^T\mathsf{y}=-\mathsf{y}^T\mathsf{By}$ which yields $\mathsf{y}^T\mathsf{By}=0$.
Let $(\mathsf{A}+\mathsf{B})(\mathsf{u}+\ii \mathsf{v})=(\mu+\ii \xi)(\mathsf{u}+\ii \mathsf{v})$, where $\mu,\xi\in\mathbb{R}$
and $\mathsf{u},\mathsf{v}\in\mathbb{R}^N$. Then we have
\begin{eqnarray}
\mathsf{u}^T\mathsf{Au}+\mathsf{v}^T\mathsf{Av}&=&
\mu(\mathsf{u}^T\mathsf{u}+\mathsf{v}^T\mathsf{v})\nonumber\\
2\mathsf{u}^T\mathsf{Bv}=\mathsf{u}^T\mathsf{Bv}-\mathsf{v}^T\mathsf{Bu}&=&\xi(\mathsf{u}^T\mathsf{u}+\mathsf{v}^T\mathsf{v}).\nonumber
\end{eqnarray}
Then we obviously have
\begin{equation}\nonumber
\lambda_{\rm min}(\mathsf{A})=\min_{\mathsf{y}\in \mathbb{R}^N,\, \mathsf{y}\ne 0}\frac{\mathsf{y}^T\mathsf{Ay}}{\mathsf{y }^T\mathsf{ y}}
\le \mu=\frac{\mathsf{u}^T\mathsf{Au}+\mathsf{v}^T\mathsf{ Av}}{\mathsf{u}^T\mathsf{u}+\mathsf{v}^T\mathsf{v}}\le 
\max_{\mathsf{y}\in \mathbb{R}^N,\, \mathsf{y}\ne \mathsf{0}}\frac{\mathsf{y}^T\mathsf{Ay}}{\mathsf{y}^T\mathsf{y}}=\lambda_{\rm max}(\mathsf{A}).
\end{equation}
Assume $\xi\ne 0$. Then $\mathsf{u}\ne 0$ and $\mathsf{v}\ne 0$.
Let us normalize the real part of the eigenvector $\mathsf{u}+\ii \mathsf{v}$, i.e.~let
$\Vert \mathsf{u}\Vert=1$. For $\xi$ we have
\begin{equation}\nonumber
 \vert\xi\vert=\frac{2\vert \mathsf{u}^T\mathsf{Bv}\vert}{\mathsf{u}^T\mathsf{u}+\mathsf{v}^T\mathsf{v}}=\frac{2\vert \mathsf{u}^T\mathsf{Bv}\vert}{1+\mathsf{v}^T\mathsf{v}}\le 
 \frac{2\left\vert \mathsf{u}^T\mathsf{B}\frac{\mathsf{v}}{\Vert \mathsf{v}\Vert} \right\vert}{1+\frac{\mathsf{v}^T\mathsf{v}}{\Vert \mathsf{v}\Vert^2}}
 =\left\vert \mathsf{u}^T\mathsf{B}\frac{\mathsf{v}}{\Vert \mathsf{v}\Vert} \right\vert=
 \frac{\vert \mathsf{u}^T\mathsf{Bv}\vert}{\Vert \mathsf{v}\Vert}  .
\end{equation}
(The inequality follows from the observation that 
if $d>0$ then $f(x)=\frac{x}{1+d^2x^2}$,
$x\in\mathbb{R}$, attains its maximum for $x=1/d$.) 
Then 
\begin{equation}\nonumber
\xi^2
\le 
\max_{\mathsf{y,w}\in\mathbb{R}^N,\,\Vert \mathsf{y}\Vert=1,\,\Vert \mathsf{w}\Vert=1}( \mathsf{y}^T\mathsf{B}\mathsf{w})^2
\le 
\max_{\mathsf{w}\in\mathbb{R}^N,\,\Vert \mathsf{w}\Vert=1}
\mathsf{w}^T\mathsf{B}^T\mathsf{Bw}
=
\max_{\mathsf{w}\in\mathbb{R}^N,\,\Vert \mathsf{w}\Vert=1}
-\mathsf{w}^T\mathsf{B}^2\mathsf{w}\le 
\rho(\mathsf{B})^2,
\end{equation}
where $\rho(\mathsf{B})$ is the spectral radius of $\mathsf{B}$.
\end{proof}

Analogously to Lemma~\ref{lem1} we can obtain the next result.
\begin{lem}\label{lem2}
Let $\mathsf{A},\mathsf{P}$ be symmetric and $\mathsf{B}$ skew-symmetric matrices, 
$\mathsf{A},\mathsf{P},\mathsf{B}\in\mathbb{R}^{N\times N}$ and let $\mathsf{P}$ be positive definite. Let $\mu+\ii \xi$ be an eigenvalue of 
$\mathsf{P}^{-1}(\mathsf{A}+\mathsf{B})$, $\mu,\xi\in\mathbb{R}$. Then 
\begin{equation}\nonumber
\mu\in[\lambda_{\rm min}(\mathsf{P}^{-1}\mathsf{A}),\lambda_{\rm max}(\mathsf{P}^{-1}\mathsf{A})],\quad
\xi\in[-\lambda_{\rm im,max}(\mathsf{P}^{-1}\mathsf{B}),\lambda_{\rm im,max}(\mathsf{P}^{-1}\mathsf{B})].
\end{equation}
\end{lem}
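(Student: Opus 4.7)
The natural plan is to reduce Lemma~\ref{lem2} to Lemma~\ref{lem1} by symmetric preconditioning. Since $\mathsf{P}$ is s.p.d., it admits a unique s.p.d.~square root $\mathsf{P}^{1/2}$, and I would introduce the transformed matrices
\begin{equation}\nonumber
\widetilde{\mathsf{A}} = \mathsf{P}^{-1/2}\mathsf{A}\mathsf{P}^{-1/2}, \qquad
\widetilde{\mathsf{B}} = \mathsf{P}^{-1/2}\mathsf{B}\mathsf{P}^{-1/2}.
\end{equation}
The matrix $\mathsf{P}^{-1}(\mathsf{A}+\mathsf{B}) = \mathsf{P}^{-1/2}\bigl(\widetilde{\mathsf{A}}+\widetilde{\mathsf{B}}\bigr)\mathsf{P}^{1/2}$ is similar to $\widetilde{\mathsf{A}}+\widetilde{\mathsf{B}}$, so the two matrices share the same spectrum. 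Hence $\mu+\ii\xi$ is an eigenvalue of $\widetilde{\mathsf{A}}+\widetilde{\mathsf{B}}$.

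Next I would check the symmetry structure of the transformed matrices. Since $\mathsf{P}^{-1/2}$ is symmetric, $\widetilde{\mathsf{A}}$ inherits symmetry from $\mathsf{A}$ and $\widetilde{\mathsf{B}}$ inherits skew-symmetry from $\mathsf{B}$, because $(\mathsf{P}^{-1/2}\mathsf{B}\mathsf{P}^{-1/2})^T = \mathsf{P}^{-1/2}\mathsf{B}^T\mathsf{P}^{-1/2} = -\widetilde{\mathsf{B}}$. Thus Lemma~\ref{lem1} applies directly to $\widetilde{\mathsf{A}}+\widetilde{\mathsf{B}}$, giving $\mu\in[\lambda_{\rm min}(\widetilde{\mathsf{A}}),\lambda_{\rm max}(\widetilde{\mathsf{A}})]$ and $\xi\in[-\lambda_{\rm im,max}(\widetilde{\mathsf{B}}),\lambda_{\rm im,max}(\widetilde{\mathsf{B}})]$.

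The final step is to translate these bounds back into quantities involving $\mathsf{P}^{-1}\mathsf{A}$ and $\mathsf{P}^{-1}\mathsf{B}$. This is immediate from similarity: $\widetilde{\mathsf{A}} = \mathsf{P}^{1/2}(\mathsf{P}^{-1}\mathsf{A})\mathsf{P}^{-1/2}$ shares its spectrum with $\mathsf{P}^{-1}\mathsf{A}$, and the same argument yields $\sigma(\widetilde{\mathsf{B}}) = \sigma(\mathsf{P}^{-1}\mathsf{B})$, so in particular $\lambda_{\rm im,max}(\widetilde{\mathsf{B}}) = \lambda_{\rm im,max}(\mathsf{P}^{-1}\mathsf{B})$. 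Substituting these identifications completes the proof.

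I do not expect any real obstacle here; the entire argument is a routine similarity-based reduction once one notices that conjugation by the symmetric matrix $\mathsf{P}^{-1/2}$ preserves both symmetry and skew-symmetry. The only small point to be careful about is to work with $\mathsf{P}^{-1/2}$ rather than a Cholesky-type factor, since the latter would destroy the skew-symmetry of the transformed $\mathsf{B}$-part.
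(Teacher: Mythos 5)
Your argument is correct and is essentially identical to the paper's own proof: both reduce to Lemma~\ref{lem1} by conjugating with $\mathsf{P}^{-1/2}$, using that this similarity preserves the spectrum while keeping $\mathsf{P}^{-1/2}\mathsf{A}\mathsf{P}^{-1/2}$ symmetric and $\mathsf{P}^{-1/2}\mathsf{B}\mathsf{P}^{-1/2}$ skew-symmetric. Your version merely spells out the similarity identifications in more detail than the paper does.
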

\begin{proof}
Let us notice that $\sigma(\mathsf{P }^{-1}\mathsf{ A})=\sigma(\mathsf{ P}^{-\frac{1}{2}}\mathsf{ AP}^{-\frac{1}{2}})$. Since $\mathsf{ P}^{-\frac{1}{2}}\mathsf{ AP}^{-\frac{1}{2}}$ is 
symmetric and $\mathsf{P }^{-\frac{1}{2}}\mathsf{ BP}^{-\frac{1}{2}}$ is skew-symmetric,
we can apply Lemma~\ref{lem1} to prove the statement.
\end{proof}

\subsection{Symmetric matrices $\mathsf{A}$ and $\mathsf{P}$}\label{SymMat}

We now recall the method 
of obtaining the individual eigenvalue bounds 
for a preconditioned s.p.d.~matrix. 
Let $\mathsf{A},\mathsf{P}\in\mathbb{R}^{N\times N}$ be s.p.d.~matrices such that
\begin{equation}\nonumber
\mathsf{A}=\sum_{k=1}^{N_{\rm loc}}\mathsf{A}_k,\quad \mathsf{P}=\sum_{k=1}^{N_{\rm loc}}\mathsf{P}_k,
\end{equation}
where $\mathsf{A}_k, \mathsf{P}_k$, are s.p.s.d., 
and 
\begin{equation}\label{kerAP}
{\rm Ker}(\mathsf{A}_k)={\rm Ker}(\mathsf{P}_k),\quad k=1,\dots,{N_{\rm loc}}.
\end{equation}
Let $S_{{\mathsf A},j}$  and $S_{{\mathsf P},j}$ denote sets of indices of matrices $\mathsf{A}_k$ and $\mathsf{P}_k$, respectively, $k=1\dots,N_{\rm loc}$,
the $j$-th row of which are nonzero, i.e.
\begin{equation}\nonumber
S_{{\mathsf{A}},j}=\{k\in \{1,\dots,N_{\rm loc}\}; \; {\rm row\;}j\;{\rm of}\; \mathsf{A}_k\;{\rm is\; nonzero}\},\quad j=1,\dots,N,
\end{equation}
and similarly for $S_{{\mathsf{P}},j}$.
We can call these sets {\it patches} of indices $j$.
From~\eqref{kerAP} we obtain $S_{{\mathsf{P}},j}=   S_{{\mathsf{A}},j}$,
and thus we can denote
\begin{equation}\nonumber
  S_j:= S_{{\mathsf{A}},j}= S_{{\mathsf{P}},j}.
\end{equation}
Note that the sets $S_j$ can be overlapping.
For s.p.s.d.~matrices 
$\mathsf{P}_k,\mathsf{A}_k$ with identical kernels, let us use the notation $\lambda_{\rm ker,min}(\mathsf{P}_k^{-1}\mathsf{A}_k)$ 
and $\lambda_{\rm ker,max}(\mathsf{P}_k^{-1}\mathsf{A}_k)$
for the minimal and maximal, respectively,
generalized eigenvalues of $\mathsf{A}_k\mathsf{u}=\lambda\mathsf{P}_k\mathsf{u}$ restricted to vectors orthogonal to ${\rm Ker}(\mathsf{P}_k)$. 
Then $0<\lambda_{\rm ker,min}(\mathsf{P}_k^{-1}\mathsf{A}_k)\le\lambda_{\rm ker,max}(\mathsf{P}_k^{-1}\mathsf{A}_k) <\infty  $,
$k=1,\dots,N_{\rm loc}$.
Let us define vectors 
$\widetilde\gamma_{\rm min},\widetilde\gamma_{\rm max}\in\mathbb{R}^N$
\begin{eqnarray}
(\widetilde{\gamma}_{\rm min})_j&=&\min\{\lambda_{\rm ker,min}(\mathsf{P}_k^{-1}\mathsf{A}_k),\, k\in S_j\}\nonumber\\
(\widetilde{\gamma}_{\rm max})_j&=&\max\{\lambda_{\rm ker,max}(\mathsf{P}_k^{-1}\mathsf{A}_k),\, k\in S_j\},\quad j=1,\dots,N,\nonumber
\end{eqnarray}
and sort both sequences in ascending order, yielding vectors
$\gamma_{\rm min},\gamma_{\rm max}\in\mathbb{R}^N$,
\begin{eqnarray}
    &&0<({\gamma}_{\rm min})_1\le ({\gamma}_{\rm min})_2\le\dots\le ({\gamma}_{\rm min})_N\nonumber\\
    &&0<({\gamma}_{\rm max})_1\le ({\gamma}_{\rm max})_2\le\dots\le ({\gamma}_{\rm max})_N.\nonumber
\end{eqnarray}

\begin{lem}\label{lem3}
Let $0<\lambda_1\le\lambda_2\le \dots\le\lambda_N$ be eigenvalues of 
$\mathsf{P}^{-1}\mathsf{A}$. Then 
\begin{equation}\nonumber
0<({\gamma}_{\rm min})_j\le\lambda_j\le ({\gamma}_{\rm max})_j,\quad j=1,\dots,N.
\end{equation}
\end{lem}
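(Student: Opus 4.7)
The plan is to deduce both inequalities from the Courant--Fischer min--max characterization of the generalized eigenvalues, applied to a carefully chosen coordinate subspace built from the patches $S_j$ and the permutations that sort $\widetilde\gamma_{\rm max}$ and $\widetilde\gamma_{\rm min}$. Since $\mathsf{P}$ is s.p.d., $\sigma(\mathsf{P}^{-1}\mathsf{A})=\sigma(\mathsf{P}^{-\frac{1}{2}}\mathsf{A}\mathsf{P}^{-\frac{1}{2}})$ and one has
\[
\lambda_j \;=\; \min_{\dim V = j}\, \max_{\mathsf{x}\in V\setminus\{0\}} \frac{\mathsf{x}^T\mathsf{A}\mathsf{x}}{\mathsf{x}^T\mathsf{P}\mathsf{x}} \;=\; \max_{\dim V = N-j+1}\, \min_{\mathsf{x}\in V\setminus\{0\}} \frac{\mathsf{x}^T\mathsf{A}\mathsf{x}}{\mathsf{x}^T\mathsf{P}\mathsf{x}}.
\]
The main local ingredient, valid for every $\mathsf{x}\in\mathbb{R}^N$ thanks to ${\rm Ker}(\mathsf{A}_k)={\rm Ker}(\mathsf{P}_k)$, is the pointwise double bound
\[
\lambda_{\rm ker,min}(\mathsf{P}_k^{-1}\mathsf{A}_k)\,\mathsf{x}^T\mathsf{P}_k\mathsf{x} \;\le\; \mathsf{x}^T\mathsf{A}_k\mathsf{x} \;\le\; \lambda_{\rm ker,max}(\mathsf{P}_k^{-1}\mathsf{A}_k)\,\mathsf{x}^T\mathsf{P}_k\mathsf{x},
\]
proved by orthogonally splitting $\mathsf{x}$ with respect to ${\rm Ker}(\mathsf{P}_k)$: on the kernel both quadratic forms vanish, on its complement the usual Rayleigh-quotient bound applies.

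For the upper bound I would take a permutation $\pi$ of $\{1,\ldots,N\}$ with $(\widetilde\gamma_{\rm max})_{\pi(i)}=(\gamma_{\rm max})_i$ and use the coordinate subspace $V_j={\rm span}\{\mathsf{e}_{\pi(1)},\ldots,\mathsf{e}_{\pi(j)}\}$ of dimension $j$. Summing the local inequalities over $k$ and using $\mathsf{x}^T\mathsf{P}_k\mathsf{x}\ge 0$ gives, for every $\mathsf{x}\in V_j\setminus\{0\}$,
\[
\frac{\mathsf{x}^T\mathsf{A}\mathsf{x}}{\mathsf{x}^T\mathsf{P}\mathsf{x}} \;\le\; \max\bigl\{\lambda_{\rm ker,max}(\mathsf{P}_k^{-1}\mathsf{A}_k)\,:\,\mathsf{x}^T\mathsf{P}_k\mathsf{x}>0\bigr\}.
\]
The key combinatorial step is to show that every such ``active'' $k$ satisfies $k\in S_{\pi(i)}$ for some $i\le j$. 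Indeed, $\mathsf{x}^T\mathsf{P}_k\mathsf{x}\ne 0$ forces $\mathsf{P}_k\mathsf{x}=\sum_{i\le j}c_i\mathsf{P}_k\mathsf{e}_{\pi(i)}\ne 0$, so some $\mathsf{P}_k\mathsf{e}_{\pi(i)}\ne 0$, and by symmetry of $\mathsf{P}_k$ this means the $\pi(i)$-th row of $\mathsf{P}_k$ is nonzero. The active maximum is then at most $\max_{i\le j}(\widetilde\gamma_{\rm max})_{\pi(i)}=(\widetilde\gamma_{\rm max})_{\pi(j)}=(\gamma_{\rm max})_j$, and the min--max formula gives $\lambda_j\le(\gamma_{\rm max})_j$.

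For the lower bound I would dualize: let $\pi'$ sort $\widetilde\gamma_{\rm min}$ ascendingly and set $W_j={\rm span}\{\mathsf{e}_{\pi'(j)},\ldots,\mathsf{e}_{\pi'(N)}\}$, of dimension $N-j+1$. The same patch argument shows that every $k$ active on $W_j$ lies in $S_{\pi'(i)}$ for some $i\ge j$, so $\lambda_{\rm ker,min}(\mathsf{P}_k^{-1}\mathsf{A}_k)\ge(\widetilde\gamma_{\rm min})_{\pi'(j)}=(\gamma_{\rm min})_j$, and the max--min formula produces $\lambda_j\ge(\gamma_{\rm min})_j$; positivity $(\gamma_{\rm min})_j>0$ is automatic from $\lambda_{\rm ker,min}(\mathsf{P}_k^{-1}\mathsf{A}_k)>0$. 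The main obstacle is the bookkeeping in the combinatorial step above: it genuinely needs both the symmetry of $\mathsf{P}_k$ and the identity $S_{\mathsf{A},j}=S_{\mathsf{P},j}$ coming from \eqref{kerAP}, since otherwise the support of the coordinate subspace would not correctly isolate only the ``small'' (respectively ``large'') local eigenvalues in the patch ordering.
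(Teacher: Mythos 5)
Your proof is correct and follows essentially the same route as the paper's: the Courant--Fischer characterization of the generalized eigenvalues combined with coordinate test subspaces chosen by sorting the patch-wise local bounds $\widetilde\gamma_{\rm min},\widetilde\gamma_{\rm max}$. The only difference is presentational --- the paper deflates one coordinate at a time ($\mathsf{v}_{j_1}=0$, then $\mathsf{v}_{j_2}=0$, \dots), whereas you fix the permutation up front and spell out the combinatorial step (that every local matrix active on the coordinate subspace belongs to a patch of one of its spanning indices), which the paper leaves implicit.
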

\begin{proof}
The proof can be found in~\cite{LadeckyPZ2020,PuLa2021}.
Since the proof provides an insight into the main idea of the method,
we present it here in a slightly different, shorter, and perhaps more accessible way.
Let us recall the Courant-Fischer min-max theorem~\cite{Golub}
\begin{equation}\nonumber
    \lambda_k=\min_{{\rm dim}\,V=k}\,\max_{\mathsf{v}\in V,\, \mathsf{v}\ne 0}
    \frac{\mathsf{v}^T\mathsf{Av}}{\mathsf{v}^T\mathsf{Pv}}
    =\max_{{\rm dim}\,V=N-k+1}\,\min_{\mathsf{v}\in V,\, \mathsf{v}\ne 0}
    \frac{\mathsf{v}^T\mathsf{Av}}{\mathsf{v}^T\mathsf{Pv}},
\end{equation}
where all $V$'s denote subspaces of $\mathbb{R}^N$.
Let us start with a lower bound to $\lambda_1$, the smallest eigenvalue of 
$\mathsf{P}^{-1}\mathsf{A}$. We have
\begin{equation}\nonumber
    \lambda_1=\min_{\mathsf{v}\ne 0}\frac{\mathsf{v}^T\mathsf{Av}}{\mathsf{v}^T\mathsf{Pv}}=\min_{ \mathsf{v}\ne \mathsf{0}}
 \frac{\sum_{k=1}^{N_{\rm loc}}\mathsf{v}^T\mathsf{A}_k\mathsf{v}}{\sum_{k=1}^{N_{\rm loc}}\mathsf{v}^T\mathsf{P}_k\mathsf{v}}
 \ge 
 \min_{k=1,\dots,N_{\rm loc}} \lambda_{\rm ker,min}(\mathsf{P}_k^{-1}\mathsf{A}_k)=: (\widetilde{\gamma}_{\rm min})_{j_1}
 = ({\gamma}_{\rm min})_1.
    \end{equation}
 Let us proceed with $\lambda_2$. We have
 \begin{equation}\nonumber
 \lambda_2=  \max_{{\rm dim}\,V=N-1}\min_{\mathsf{v}\in V,\, \mathsf{v}\ne 0}
    \frac{\mathsf{v}^T\mathsf{Av}}{\mathsf{v}^T\mathsf{Pv}} 
    \ge      
   \min_{\mathsf{v}\in V,\, \mathsf{v}\ne 0,\, \mathsf{v}_{j_1}=0}
    \frac{\sum_{k=1}^{N_{\rm loc}}\mathsf{v}^T\mathsf{A}_k\mathsf{v}}{\sum_{k=1}^{N_{\rm loc}}\mathsf{v}^T\mathsf{P}_k\mathsf{v}}\ge  
   \min_{j\ne j_1} (\widetilde{\gamma}_{\rm min})_j=:(\widetilde\gamma_{\rm min})_{j_2}\ge ({\gamma}_{\rm min})_2.
 \end{equation}
   For $\lambda_3$ we have
 \begin{eqnarray}\nonumber
 \lambda_3&=&  \max_{{\rm dim}\,V=N-2}\min_{\mathsf{v}\in V,\, \mathsf{v}\ne 0}
    \frac{\mathsf{v}^T\mathsf{Av}}{\mathsf{v}^T\mathsf{Pv}} 
    \ge      
   \min_{\mathsf{v}\in V,\, \mathsf{v}\ne 0,\, \mathsf{v}_{j_1}=0,\, \mathsf{v}_{j_2}=0}
    \frac{\sum_{k=1}^{N_{\rm loc}}\mathsf{v}^T\mathsf{A}_k\mathsf{v}}{\sum_{k=1}^{N_{\rm loc}}\mathsf{v}^T\mathsf{P}_k\mathsf{v}}\ge
   \min_{j\ne j_1,j_2} (\widetilde\gamma_{\rm min})_j=:(\widetilde{\gamma}_{\rm min})_{j_3}
   \nonumber\\
    &\ge  &
   ({\gamma}_{\rm min})_3.\nonumber
 \end{eqnarray}
 Continuing in this manner, we obtain   
 $\lambda_j\ge  ({\gamma}_{\rm min})_j$, $j=1,\dots,N$. Similarly,
  starting from $\lambda_N$ and using the opposite inequalities and the upper bounds, we obtain 
 $\lambda_j\le ({\gamma}_{\rm max})_j$, $j=1,\dots,N$.
 \end{proof}

\begin{remark}
If matrices $\mathsf A$ and $\mathsf P$ are singular and have the
same kernel, we can restrict our consideration onto the vectors 
orthogonal to the kernel and obtain the bounds to the positive eigenvalues of $\mathsf{P}^{-1}\mathsf{A}$ according to Lemma~\ref{lem3}. This can be the case, for example, if Neumann
or periodic boundary conditions are applied.
\end{remark}

\begin{remark}\label{remG}
In~\cite{G}, Gergelits et al.~proved using Hall's theorem (\cite{Bondy1976,Hall1935}) a result which could be 
expressed in our notation and under the assumptions of Lemma~\ref{lem3} as 
\begin{equation}\label{G_statement}
    \sigma(\mathsf{P}^{-1}\mathsf{A})\subset 
    \cup_{j=1}^N[(\widetilde{\gamma}_{\min})_j,(\widetilde{\gamma}_{\max})_j].
\end{equation}
Note that this result is stronger that that of Lemma~\ref{lem3},
because in addition to~\eqref{G_statement}, after
sorting the vectors $\widetilde\gamma_{\rm min}$ and
$\widetilde\gamma_{\rm max}$, we obtain the bounds
defined in Lemma~\ref{lem3}.
\end{remark}

\subsection{Symmetric matrices $\mathsf A$ and $\mathsf P$ and skew-symmetric matrix $\mathsf B$}\label{NonSymMat}

In this part we 
present a new method for obtaining the bounds to all
complex eigenvalues of a preconditioned real non-symmetric matrix. 
To this goal let $\mathsf{A},\mathsf{B},\mathsf{P}\in \mathbb{R}^{N\times N}$ and let $\mathsf{A}$ be symmetric, $\mathsf{P}$  s.p.d., and $\mathsf{B}$ skew-symmetric, and let
\begin{equation}\nonumber
\mathsf{A}=\sum_{k=1}^{N_{\rm loc}}\mathsf{A}_k,\quad \mathsf{B}=\sum_{k=1}^{N_{\rm loc}}\mathsf{B}_k, \quad \mathsf{P}=\sum_{k=1}^{N_{\rm loc}}\mathsf{P}_k.
\end{equation}
We assume that the matrices $\mathsf{P}_k$
are s.p.s.d., $\mathsf{A}_k$ symmetric and $\mathsf{B}_k$
skew-symmetric, and their kernels fulfill
\begin{equation}\nonumber
{\rm Ker}(\mathsf{P}_k)\subseteq {\rm Ker}(\mathsf{A}_k),\quad
{\rm Ker}(\mathsf{P}_k)\subseteq {\rm Ker}(\mathsf{B}_k),\quad k=1,\dots,N_{\rm loc}.
\end{equation}
Let $S_{\mathsf{P},j}$ be sets of indices of matrices (patches) 
the $j$-th row of which are nonzero, i.e.
\begin{eqnarray}\nonumber
S_{\mathsf{P},j}&=&\{k\in \{1,\dots,N_{\rm loc}\}; \; {\rm row\;}j\;{\rm of}\; \mathsf{P}_k\;{\rm is\; nonzero}\}.\nonumber
\end{eqnarray}
Note that the sets $S_{\mathsf{P},j}$ can be overlapping.
For symmetric matrices 
$\mathsf{P}_k,\mathsf{A}_k$, let us denote by $\lambda_{\rm ker,min}(\mathsf{P}_k^{-1}\mathsf{A}_k)$ 
and $\lambda_{\rm ker,max}(\mathsf{P}_k^{-1}\mathsf{A}_k)$
 the minimal and maximal, respectively,
generalized eigenvalues of $\mathsf{A}_k\mathsf{u}=\lambda\mathsf{P}_k\mathsf{u}$ restricted to vectors orthogonal to ${\rm Ker}(\mathsf{P}_k)$. 
For skew-symmetric matrices 
$\mathsf{B}_k$, let us use the notation $\lambda_{\rm ker,im,max}(\mathsf{P}_k^{-1}\mathsf{B}_k)$ 
for the maximal imaginary part of 
generalized eigenvalues of $\mathsf{B}_k\mathsf{u}=\lambda\mathsf{P}_k\mathsf{u}$ restricted to vectors orthogonal to ${\rm Ker}(\mathsf{P}_k)$. 
Let us define for $j=1,\dots,N$ 
\begin{eqnarray}
(\alpha_{\rm min})_j&=&\min\{\lambda_{\rm ker,min}(\mathsf{P}_k^{-1}\mathsf{A}_k),\, k\in S_{\mathsf{P},j}\}\nonumber\\
(\alpha_{\rm max})_j&=&\max\{\lambda_{\rm ker,max}(\mathsf{P}_k^{-1}\mathsf{A}_k),\, k\in S_{\mathsf{P},j}\},\nonumber\\
(\beta_{\rm max})_j&=&\max\{\lambda_{\rm ker,im,max}(\mathsf{P}_k^{-1}\mathsf{B}_k),\, k\in S_{\mathsf{P},j}\}.\nonumber
\end{eqnarray}

\begin{lem}\label{lemNonsym1}
Let $\mu+\ii \xi$ be an eigenvalue of 
$\mathsf{P}^{-1}(\mathsf{A}+\mathsf{B})$. 
Then 
\begin{equation}\nonumber
\mu\in[\min_{j=1,\dots,N}\{(\alpha_{\rm min})_j\},
\max_{j=1,\dots,N}\{(\alpha_{\rm max})_j\}],\quad
\xi\in[-\max_{j=1,\dots,N}\{(\beta_{\rm max})_j\},
\max_{j=1,\dots,N}\{(\beta_{\rm \max})_j\}].
\end{equation}
\end{lem}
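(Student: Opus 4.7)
The plan is to reduce, via Lemma~\ref{lem2}, the statement to two separate assertions: for the real part, $[\lambda_{\rm min}(\mathsf{P}^{-1}\mathsf{A}),\lambda_{\rm max}(\mathsf{P}^{-1}\mathsf{A})]\subseteq[\min_j(\alpha_{\rm min})_j,\max_j(\alpha_{\rm max})_j]$, and for the imaginary part, $\lambda_{\rm im,max}(\mathsf{P}^{-1}\mathsf{B})\le\max_j(\beta_{\rm max})_j$. A brief bookkeeping step first identifies the right-hand sides with the global extrema over $k$ of the corresponding local kernel-restricted eigenvalues, since every index $k$ with $\mathsf{P}_k\neq\mathsf{0}$ belongs to at least one patch $S_{\mathsf{P},j}$.

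The symmetric part would be handled by a simpler version of the argument in Lemma~\ref{lem3}, targeting only the global Rayleigh-quotient extrema rather than per-eigenvalue bounds. Writing $\mathsf{v}^T\mathsf{A}\mathsf{v}/\mathsf{v}^T\mathsf{P}\mathsf{v}=\sum_k\mathsf{v}^T\mathsf{A}_k\mathsf{v}/\sum_k\mathsf{v}^T\mathsf{P}_k\mathsf{v}$ and using the local inequality $\mathsf{v}^T\mathsf{A}_k\mathsf{v}\ge\lambda_{\rm ker,min}(\mathsf{P}_k^{-1}\mathsf{A}_k)\,\mathsf{v}^T\mathsf{P}_k\mathsf{v}$, which follows by splitting $\mathsf{v}$ along ${\rm Ker}(\mathsf{P}_k)$ and its orthogonal complement and invoking ${\rm Ker}(\mathsf{P}_k)\subseteq{\rm Ker}(\mathsf{A}_k)$ to discard the kernel component, I would obtain $\lambda_{\rm min}(\mathsf{P}^{-1}\mathsf{A})\ge\min_k\lambda_{\rm ker,min}(\mathsf{P}_k^{-1}\mathsf{A}_k)$; the upper bound is entirely analogous.

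For the imaginary part, the congruence argument of Lemma~\ref{lem2} together with the identities established in the proof of Lemma~\ref{lem1} gives, for any eigenvector $\mathsf{u}+\ii\mathsf{v}$ of $\mathsf{P}^{-1}(\mathsf{A}+\mathsf{B})$,
\begin{equation}\nonumber
2\,\mathsf{u}^T\mathsf{B}\mathsf{v}=\xi\,(\mathsf{u}^T\mathsf{P}\mathsf{u}+\mathsf{v}^T\mathsf{P}\mathsf{v}).
\end{equation}
The core step is the local estimate
\begin{equation}\nonumber
2\,|\mathsf{u}^T\mathsf{B}_k\mathsf{v}|\le\lambda_{\rm ker,im,max}(\mathsf{P}_k^{-1}\mathsf{B}_k)\,(\mathsf{u}^T\mathsf{P}_k\mathsf{u}+\mathsf{v}^T\mathsf{P}_k\mathsf{v}).
\end{equation}
To prove it I would use ${\rm Ker}(\mathsf{P}_k)\subseteq{\rm Ker}(\mathsf{B}_k)$ to reduce to the case $\mathsf{u},\mathsf{v}\in{\rm Ker}(\mathsf{P}_k)^{\perp}$, substitute $\tilde{\mathsf{u}}=\mathsf{P}_k^{1/2}\mathsf{u}$ and $\tilde{\mathsf{v}}=\mathsf{P}_k^{1/2}\mathsf{v}$ on that subspace, and then observe that $\tilde{\mathsf{B}}_k:=\mathsf{P}_k^{-1/2}\mathsf{B}_k\mathsf{P}_k^{-1/2}$ is skew-symmetric, hence normal, so that its Euclidean operator norm equals its spectral radius, that is, $\lambda_{\rm ker,im,max}(\mathsf{P}_k^{-1}\mathsf{B}_k)$; an AM--GM step then converts $\|\tilde{\mathsf{u}}\|\,\|\tilde{\mathsf{v}}\|$ into $(\|\tilde{\mathsf{u}}\|^2+\|\tilde{\mathsf{v}}\|^2)/2$. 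Summing over $k$ with the triangle inequality on the left and bounding the right-hand side by the global maximum yields $|\xi|\le\max_k\lambda_{\rm ker,im,max}(\mathsf{P}_k^{-1}\mathsf{B}_k)=\max_j(\beta_{\rm max})_j$.

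The main obstacle is the local skew-symmetric bound: the possibly large kernel of $\mathsf{P}_k$ must be handled with care so that the $\mathsf{P}_k^{\pm 1/2}$-congruence is legitimate only on ${\rm Ker}(\mathsf{P}_k)^{\perp}$, and the passage from the bilinear form $\tilde{\mathsf{u}}^T\tilde{\mathsf{B}}_k\tilde{\mathsf{v}}$ to the spectral radius of $\tilde{\mathsf{B}}_k$ rests on the normality of skew-symmetric matrices---this is the one place where the argument genuinely departs from the symmetric template of Lemma~\ref{lem3}, everything else being a routine assembly that mirrors the symmetric case.
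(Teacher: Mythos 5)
Your proposal is correct and follows essentially the same route as the paper: reduce via Lemma~\ref{lem2} to separate bounds for $\mathsf{P}^{-1}\mathsf{A}$ and $\mathsf{P}^{-1}\mathsf{B}$, bound the Rayleigh quotient of sums by the extremal local kernel-restricted eigenvalues for the symmetric part, and use the local estimate $2\vert\mathsf{u}^T\mathsf{B}_k\mathsf{v}\vert\le\lambda_{\rm ker,im,max}(\mathsf{P}_k^{-1}\mathsf{B}_k)(\mathsf{u}^T\mathsf{P}_k\mathsf{u}+\mathsf{v}^T\mathsf{P}_k\mathsf{v})$ summed over $k$ for the skew-symmetric part. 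The only difference is that you spell out the justification of that local estimate (congruence on ${\rm Ker}(\mathsf{P}_k)^{\perp}$, normality of skew-symmetric matrices, AM--GM), which the paper asserts without detail.
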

\begin{proof}
Since by Lemma~\ref{lem2} real parts of the eigenvalues of $\mathsf{P}^{-1}(\mathsf{A}+\mathsf{B})$
are bounded by the extremal eigenvalues of $\mathsf{P}^{-1}\mathsf{A}$
and the imaginary parts are bounded by extremal eigenvalues of
$\mathsf{P}^{-1}\mathsf{B}$, we can just estimate the spectra of
$\mathsf{P}^{-1}\mathsf{A}$ and $\mathsf{P}^{-1}\mathsf{B}$
separately.
The proof is similar to the proof of Lemma~\ref{lem3}
but we now do not have the ambition to estimate the individual 
eigenvalues. We need only to estimates 
the smallest and largest eigenvalues of $\mathsf{P}^{-1}\mathsf{A}$ and 
the largest imaginary parts of the eigenvalues of $\mathsf{P}^{-1}\mathsf{B}$. 
For any eigenvalue $\lambda\in\mathbb{R}$ 
 of $\mathsf{P}^{-1}\mathsf{A}$ we obviously have
\begin{equation}\nonumber
 \lambda\ge\min_{\mathsf{v}\ne 0}\frac{\mathsf{v}^T\mathsf{Av}}{\mathsf{v}^T\mathsf{Pv}}=\min_{ \mathsf{v}\ne \mathsf{0}}
 \frac{\sum_{k=1}^{N_{\rm loc}}\mathsf{v}^T\mathsf{A}_k\mathsf{v}}{\sum_{k=1}^{N_{\rm loc}}\mathsf{v}^T\mathsf{P}_k\mathsf{v}}
 \ge 
 \min_{j=1,\dots,N} (\alpha_{\rm min})_j.
 \end{equation}
Analogously, we obtain the upper bound
$\lambda\le \max_{j=1,\dots,N} (\alpha_{\rm max})_j$. For any
eigenvalue $\ii \nu$ of $\mathsf{P}^{-1}\mathsf{B}$,
$\nu\in\mathbb{R}$, we have
$\mathsf{B}(\mathsf{u}+\ii\mathsf{v})=\ii\nu\mathsf{P}(\mathsf{u}+\ii\mathsf{v})$ for some eigenvector $\mathsf{u}+\ii\mathsf{v}$,
$\mathsf{u,v}\in\mathbb{R}^N$, and thus
\begin{eqnarray}\nonumber
 \vert\nu\vert&=&\frac{2\vert\mathsf{u}^T\mathsf{Bv}\vert}{\mathsf{u}^T\mathsf{Pu}+\mathsf{v}^T\mathsf{Pv}} =
\frac{2\vert\sum_{k=1}^{N_{\rm loc}}\mathsf{u}^T\mathsf{B}_k\mathsf{v}\vert}{\sum_{k=1}^{N_{\rm loc}}(\mathsf{u}^T\mathsf{P}_k\mathsf{u}+
\mathsf{v}^T\mathsf{P}_k\mathsf{v})}\le
\max_{k=1,\dots,N_{\rm loc}}
\frac{2\vert\mathsf{u}^T\mathsf{B}_k\mathsf{v}\vert}{\mathsf{u}^T\mathsf{P}_k\mathsf{u}+
\mathsf{v}^T\mathsf{P}_k\mathsf{v}}\nonumber\\
&\le&\max_{k=1,\dots,N_{\rm loc}}\lambda_{\rm ker,im,max}
(\mathsf{P}_k^{-1}\mathsf{B}_k)\le
\max_{j=1,\dots,N}(\beta_{\rm max})_j.
\nonumber
 \end{eqnarray}
\end{proof}

\begin{remark}\label{rem_counter}{ 
In Remark~\ref{remG} we recall the result of~\cite{G} that the spectrum of a
symmetric preconditioned matrix is contained in a union 
of intervals defined by minimal and maximal eigenvalues 
of $\mathsf{P}_k^{-1}\mathsf{A}_k$ on every patch.
We might be interested whether the individual complex eigenvalues of $\mathsf{P}^{-1}(\mathsf{A}+\mathsf{B})$ can be analogously reliably bounded by rectangles in the complex plane defined by extreme real and imaginary  eigenvalues of 
$\mathsf{P}_k^{-1}\mathsf{A}_k$ and
$\mathsf{P}_k^{-1}\mathsf{B}_k$, respectively, 
over the patches. Unfortunately, the answer is negative, at least for this particular form of the statement. Let us introduce a counter-example. Let
\begin{equation}\nonumber\footnotesize
 \mathsf{A}_1+\mathsf{B}_1=\left(\begin{array}{cccc}
   10   & 12 &0 &0 \\ 
   -12   & 11 &0 &0 \\ 
     0 &  0 & 0& 0\\ 
      0 &0   & 0& 0
 \end{array}   \right),\;
 \mathsf{A}_2+\mathsf{B}_2=\left(\begin{array}{cccc}
 0 & 0  &0 &0 \\ 
 0 & 10   & 11 & 0 \\ 
0 &  -11   & 10 & 0 \\       
    0   & 0  & 0& 0
 \end{array}   \right),
 \;
 \mathsf{A}_3+\mathsf{B}_3=\left(\begin{array}{cccc}
 0 & 0  & 0&0 \\ 
 0 & 0  &0 & 0\\
0& 0 & 8   & 11   \\ 
0&0 &  -11   & 10   
 \end{array}   \right),
\end{equation}
and
\begin{equation}\nonumber\footnotesize
 \mathsf{P}_1=\left(\begin{array}{rrrr}
   1   & 0 &0 &0 \\ 
   0   & 1 &0 &0 \\ 
     0 &  0 & 0& 0\\ 
      0 &0   & 0& 0
 \end{array}   \right),\quad
 \mathsf{P}_2=\left(\begin{array}{rrrr}
 0 & 0  &0 &0 \\ 
 0 & 1   & 0 & 0 \\ 
0 &  0   & 1 & 0 \\       
    0   & 0  & 0& 0
 \end{array}   \right),
 \quad
 \mathsf{P}_3=\left(\begin{array}{rrrr}
 0 & 0  & 0&0 \\ 
 0 & 0  &0 & 0\\
0& 0 & 1   & 0   \\ 
0&0 &  0   & 1   
 \end{array}   \right).
\end{equation}
Then we obtain 
\begin{eqnarray}
    &&(\alpha_{\rm min})_1=(\alpha_{\rm min})_2=10,\quad
    (\alpha_{\rm min})_3=(\alpha_{\rm min})_4=8,\nonumber    \\
   &&(\alpha_{\rm max})_1=(\alpha_{\rm max})_2=11,\quad
   (\alpha_{\rm max})_3=(\alpha_{\rm max})_4=10,\nonumber\\
   &&(\beta_{\rm max})_1=   (\beta_{\rm max})_2=12,\quad
   (\beta_{\rm max})_3= (\beta_{\rm max})_4=11,\nonumber
\end{eqnarray}
and the eigenvalues of $\mathsf{P}^{-1}(\mathsf{A+B})$
\begin{equation}\nonumber
    \lambda_1\approx 9.881 + 11.322\ii,\quad
  \lambda_2\approx 9.881 - 11.322\ii,\quad
 \lambda_3\approx 9.869 + 5.827\ii,\quad 
 \lambda_4\approx 9.869 - 5.827\ii.
   \end{equation} 
   We can see that $ \lambda_1$ and $ \lambda_2$ do not fall into
   any rectangle $[(\alpha_{\rm min})_j,(\alpha_{\rm max})_j]
   \times   [-(\beta_{\rm max})_j\ii,(\beta_{\rm max})_j\ii]  $,
   $j=1,2,3,4$.
}\end{remark}

\section{Preconditioned discontinuous Galerkin method\\
 and diffusion-reaction problem}\label{secDG}

In this part we apply the method of preconditioning and 
evaluating the individual eigenvalue  bounds to the
diffusion-reaction problem with Dirichlet boundary condition which is discretized by the DG method~\cite{DolejsiFeistauer2015,ErnG}. 
We consider a polygonal domain $\Omega\subset{\mathbb R}^2$ divided 
into $N_{\tau}$ triangles $\tau_m$, $m=1,\dots,N_{\tau}$. We assume that the mesh is conforming, i.e.~the neighbouring elements share the entire edge, and that the elements $\tau_m$ are shape regular, i.e.~the ratios of the diameters of the inscribed and circumscribed circles are uniformly bounded. 
The set of interior edges is denoted by $\varepsilon^{\rm i}_k$, $k=1,\dots,N_{\varepsilon,\ii}$ and the set of boundary edges 
is denoted by $\varepsilon^{\rm b}_k$, $k=1,\dots,
N_{\varepsilon,\rm{b}}$. Moreover, we denote the unit normal on edge $\varepsilon^{\rm i}_k$ or $\varepsilon^{\rm b}_k$ as $n$. The directions of normal vectors are arbitrary but fixed for the interior edges and outward for the boundary edges.

The model problem reads 
\begin{equation}\label{model1}
-\nabla\cdot\left(a(x)\nabla u(x)\right)+c(x)u(x)=f(x),\;\; x\in \Omega,\qquad u(x)=0,\;\; x\in\partial\Omega,
\end{equation}
where $a:\Omega \to {\mathbb R}^{2\times 2}$ is essentially bounded
and uniformly s.p.d.~in $\Omega$,
$c\in L^\infty(\Omega)$ is nonnegative, and
$f\in L^2(\Omega)$. We define
the discontinuous finite element space
\begin{equation}\nonumber
V_{h}^s=\{v\in L^2(\Omega);\; v|_{\tau_m}\in P_s(\tau_m),\; m=1,\dots,N_\tau\}
\end{equation}
where $P_s(\tau_m)$ is the space of polynomials of total degree 
at most $s$ defined on $\tau_m$.
For a piece-wise continuous function $v$, we define one-sided values on the edges marked by L (left) and R (right),
respectively,
\begin{equation}
  v^{\rm L}(x) = \lim_{t \to 0+}v(x-tn),\qquad v^{\rm R}(x) = \lim_{t \to 0+}v(x+tn).
\end{equation}
With the aid of these one-sided values, we define the jump of a function on an interior and boundary edge
\begin{eqnarray}\nonumber
\llbracket v\rrbracket_{\varepsilon^{\rm i}_k} =  
(v^{\rm L}-v^{\rm R})n, \qquad 
\llbracket v\rrbracket_{\varepsilon_k^{\rm b}} =  
v^{\rm L}n, \nonumber
 \end{eqnarray}
respectively, and the average on an interior and boundary edge
\begin{eqnarray}\nonumber
\{\!\{ v\}\!\}_{\varepsilon^{\rm i}_k} =\frac{1}{2}\left(  
v^{\rm L}+v^{\rm R}\right), \qquad 
\{\!\{ v\}\!\}_{\varepsilon_k^{\rm b}} = v^{\rm L}, \nonumber
 \end{eqnarray}
 respectively.
We consider the biliner form on $L^2(\Omega)$
\begin{equation}\nonumber
(u,v)=\int_{\Omega}uv\,
{\rm d}x=\sum_{m=1}^{N_\tau}\int_{\tau_m}uv\,
{\rm d}x
\end{equation}
and define the bilinear forms on $V_h^s\times V_h^s$
\begin{eqnarray}
A(u,v)_h&=&\sum_{m=1}^{N_\tau}\int_{\tau_m}a\nabla u\cdot\nabla v\,
{\rm d}x, \nonumber\\
C(u,v)_h&=&\sum_{m=1}^{N_\tau}\int_{\tau_m}c\, u v\,
{\rm d}x,\nonumber\\
S(u,v)_h&=&\sum_{k=1}^{N_{\varepsilon,\rm i}}\int_{\varepsilon^{\rm i}_k}\{\!\{ a\nabla u\}\!\}\llbracket v \rrbracket\,{\rm d}s+\sum_{k=1}^{N_{\varepsilon,\rm b}}\int_{\varepsilon^{\rm b}_k}\{\!\{ a\nabla u\}\!\}\llbracket v \rrbracket{\rm d}s,\nonumber\\
J^\sigma(u,v)_h&=&\sum_{k=1}^{N_{\varepsilon,\rm i}}\int_{\varepsilon^{\rm i}_k}\sigma\llbracket u \rrbracket\llbracket v \rrbracket\,{\rm d}s+\sum_{k=1}^{N_{\varepsilon,\rm b}}\int_{\varepsilon^{\rm b}_k}\sigma\llbracket u \rrbracket\llbracket v \rrbracket{\rm d}s.\nonumber
\end{eqnarray}
Finally, we define the bilinear form
\begin{eqnarray}
\mathcal{A}(u,v)_h &=&A(u,v)_h-S(u,v)_h-S(v,u)_h+J^\sigma(u,v)_h+C(u,v)_h,
\nonumber
\end{eqnarray}
where a sufficiently large $\sigma>0$ at the penalization term $J^\sigma(.,.)_h$
guarantees positive definiteness of 
the bilinear form $\mathcal{A}(u,v)_h$ on $V_h^s\times V_h^s$. For the discussion of suitable choices of $\sigma$ see, e.g.~\cite{DolejsiFeistauer2015,Epshteyn}.
The discretized problem now reads to find $u_h\in V_h^s$ such that
\begin{equation}\nonumber
\mathcal{A}(u,v)_h=(f,v), \quad \forall v\in V_h^s.
\end{equation}

Let us assume that the FE basis functions are defined by
$N_{\rm dof}$ DOFs, typically  some nodal values 
in every element $\tau_m$, $m=1,\dots,N_\tau$.
In general, numbers of DOFs in individual elements may differ due to possibly
different approximation spaces on different elements.
The system of linear equations reads
\begin{equation}\label{Aub}
\mathsf{Au}=\mathsf{f}
\end{equation}
where $\mathsf{A}\in \mathbb{R}^{N_{\rm dof}\times N_{\rm dof}}$
and $\mathsf{f}\in \mathbb{R}^{N_{\rm dof}}$.
Denoting the FE basis functions by $\varphi_j$,
$j=1,\dots,N_{\rm dof}$, and from
$u=\sum_{j=1}^{N_{\rm dof}}\mathsf{u}_j\varphi_j$, we have 
\begin{equation}\nonumber
\mathsf{A}_{ij}=\mathcal{A}(\varphi_j,\varphi_i)_h,
\quad \mathsf{f}_i=(f,\varphi_i), \quad i,j=1,\dots, N_{\rm dof}.
\end{equation}
The preconditioning matrix $\mathsf P$ is obtained in the same way as $\mathsf A$ but for different data, namely $a_{\rm p},c_{\rm p},\sigma_{\rm p}$,
which are usually called {\it reference data}.
Usually, the reference data $a_{\rm p}$ and $c_{\rm p}$ are constant on $\Omega$
or can have a property which allow for easy 
solving a system of linear equations with the matrix $\mathsf P$.

Here the matrices $\mathsf{A}_k$ and $\mathsf{P}_k$ 
from Section~\ref{SymMat} are associated with an interior edge $\varepsilon_k^{\rm i}$ attached to two elements, e.g.,~$\tau_r$ and $\tau_s$; or with a boundary edge
$\varepsilon_k^{\rm b}$, where only one element is attached and
which can be formally expressed by $\tau_s=\emptyset$, 
\begin{eqnarray}\label{A_terms}
    (\mathsf{A}_k)_{ij}&=&\frac{1}{3}\int_{\tau_r\cup\tau_s}
    a\nabla \varphi_j\cdot\nabla\varphi_i\,\dd x+
  \frac{1}{3}\int_{\tau_r\cup\tau_s}
    c \,\varphi_j\varphi_i\,\dd x  \\
    &&- \int_{\varepsilon_k} 
    \{\!\{  a\nabla \varphi_i\}\!\} 
    \llbracket     \varphi_j     \rrbracket
    +
    \{\!\{ a\nabla \varphi_j\}\!\}\llbracket \varphi_i \rrbracket\,\dd s+\int_{\varepsilon_k}   \sigma 
 \llbracket \varphi_j \rrbracket\,\llbracket \varphi_i \rrbracket\,\dd s\nonumber
\end{eqnarray}
and
\begin{eqnarray}\label{P_terms}
    (\mathsf{P}_k)_{ij}&=&\frac{1}{3}\int_{\tau_r\cup\tau_s}
    a_{\rm p}\nabla \varphi_j\cdot\nabla\varphi_i\,\dd x+
  \frac{1}{3}\int_{\tau_r\cup\tau_s}
    c_{\rm p} \varphi_j\varphi_i\,\dd x  \\
    &&- \int_{\varepsilon_k} 
     \{\!\{ a_{\rm p}\nabla \varphi_i\}\!\}
    \llbracket \varphi_j\rrbracket
    +
    \{\!\{ a_{\rm p}\nabla \varphi_j\}\!\}\llbracket \varphi_i \rrbracket\,\dd s+\int_{\varepsilon_k}
 \sigma_{\rm p}\llbracket \varphi_j \rrbracket\,\llbracket \varphi_i \rrbracket\,\dd s.\nonumber
\end{eqnarray}
If we wish to apply Lemma~\ref{lem1}, then
$N_{\rm loc}$ is here the number of all edges,
$N_{\rm loc}=N_{\varepsilon,\rm i}+N_{\varepsilon,\rm b}$,
and $N$ is equal to the the number of DOFs,
$N=N_{\rm dof}$.
The factor $\frac{1}{3}$ is used 
in formulae~\eqref{A_terms} and~\eqref{P_terms}
because of the element integrals contributing to every edge integral
with the same amount.
In our setting, we use $s=1$ in $V^s_h$, i.e.~linear polynomials are used on every element and their nodal values are
chosen as DOFs. 
Then the matrices $\mathsf{A}_k$ and $\mathsf{P}_k$
have only $6\times 6$ non-zero elements, which correspond to 
$6$ DOFs on a pair of adjacent triangles which are connected by the edge $\varepsilon_k^{\rm i}$.
The first term in~\eqref{A_terms} produces two $3\times 3$ s.p.s.d.~matrices (or a $6\times 6$ block diagonal s.p.s.d.~matrix). The second term yields
two $3\times 3$ s.p.d.~matrices. The third term yields
a $6\times 6$ symmetric indefinite matrix. The fourth term is a penalty term 
yielding a $4\times 4$ s.p.s.d.~matrix.
If $\mathsf{A}_k$ and $\mathsf{P}_k$ correspond to a 
boundary edge, these respective matrices have
sizes $3\times 3$, $3\times 3$, $3\times 3$,
and $2\times 2$, respectively.

Let us now introduce the algorithm for 
constructing $\mathsf{A}$ and $\mathsf{P}$ and 
the bounds $\gamma_{\rm min},\gamma_{\rm max}\in\mathbb{R}^{N_{\rm dof}}$
to all individual eigenvalues of the preconditioned matrix $\mathsf{P}^{-1}\mathsf{A}$ according to Lemma~\ref{lem3}.

{\bf Algorithm~1.} (Constructing $\mathsf{A,P}\in \mathbb{R}^{N_{\rm dof}\times N_{\rm dof}}$ and
eigenvalue bounds $\gamma_{\rm min},\gamma_{\rm max}\in\mathbb{R}^{N_{\rm dof}}$.)
\begin{itemize}
    \item[1.]  
    Set vectors $\widetilde\gamma_{\rm min},\widetilde\gamma_{\rm max}\in\mathbb{R}^{N_{\rm dof}}$ as
    $(\widetilde\gamma_{\rm min})_j=\infty$ and $(\widetilde\gamma_{\rm max})_j=0$, $j=1,\dots,{N_{\rm dof}}$. 
    \item[2.]
    For every interior edge $\varepsilon_k^{\rm i}$, $k=1,\dots,N_{\varepsilon,{\rm i}}$: \\
    Set $\widetilde{\mathsf{A}}_k, \widetilde{\mathsf{P}}_k \in 
    \mathbb{R}^{6\times 6}$ according to~\eqref{A_terms}
    and~\eqref{P_terms}, respectively.\\
    Add matrices $\widetilde{\mathsf{A}}_k,\widetilde{\mathsf{P}}_k$ to elements at positions given by indices $j_1,\dots,j_6$ in $\mathsf{A}$ and $\mathsf{P}$, respectively.\\
    Compute generalized eigenvalues of 
    $\widetilde{\mathsf{A}}_k\mathsf{v}=\lambda\widetilde{\mathsf{P}}_k \mathsf{v}$\\
    and denote 
    $\gamma_1:=\lambda_{\rm ker,min}(\widetilde{\mathsf{P}}_k^{-1}\widetilde{\mathsf{A}}_k)$
    and     $\gamma_6:=\lambda_{\rm ker,max}(\widetilde{\mathsf{P}}_k^{-1}\widetilde{\mathsf{A}}_k)$.\\ 
    For $s=1,\dots,6$: \\
    $(\widetilde\gamma_{\rm min})_{j_s}:=\min\{\gamma_1, (\widetilde\gamma_{\rm min})_{j_s}\}$,\\
    $(\widetilde\gamma_{\rm max})_{j_s}:=\max\{\gamma_6, (\widetilde\gamma_{\rm max})_{j_s}\}$.
    \item[3.]  
    For every boundary edge $\varepsilon^{\rm b}_k$, $k=1,\dots,N_{\varepsilon,{\rm b}}$: \\
    Set $\widetilde{\mathsf{A}}_k,\widetilde{\mathsf{P}}_k \in 
    \mathbb{R}^{3\times 3}$ according to~\eqref{A_terms}
    and~\eqref{P_terms}, respectively, with only one adjacent element.\\
Add matrices $\widetilde{\mathsf{A}}_k,\widetilde{\mathsf{P}}_k$ to elements at positions given by indices $j_1,j_2,j_3$ in $\mathsf{A}$ and $\mathsf{P}$, respectively.\\
    Compute generalized eigenvalues of 
    $\widetilde{\mathsf{A}}_k\mathsf{v}=\lambda\widetilde{\mathsf{P}}_k \mathsf{v}$\\
    and denote 
    $\gamma_1:=\lambda_{\rm ker,min}(\widetilde{\mathsf{P}}_k^{-1}\widetilde{\mathsf{A}}_k)$
    and     $\gamma_3:=\lambda_{\rm ker,max}(\widetilde{\mathsf{P}}_k^{-1}\widetilde{\mathsf{A}}_k)$. \\ 
    For $s=1,2,3$: \\
    $(\widetilde\gamma_{\rm min})_{j_s}:=\min\{\gamma_1, (\widetilde\gamma_{\rm min})_{j_s}\}$\\
    $(\widetilde\gamma_{\rm max})_{j_s}:=\max\{\gamma_3, (\widetilde\gamma_{\rm max})_{j_s}\}$
    \item[4.]
    Sort the vectors $\widetilde\gamma_{\rm min},\widetilde\gamma_{\rm max}\in\mathbb{R}^{N_{\rm dof}}$ 
    in increasing order to obtain  
    $\gamma_{\rm min},\gamma_{\rm max}\in\mathbb{R}^{N_{\rm dof}}$.
\end{itemize}

We introduced Algorithm~1 only for the case of 
homogeneous Dirichlet boundary conditions and 
triangular elements with element-wise linear approximation polynomials in order not to involve too many parameters 
in the exposition. However, the algorithm can be easily modified to any approximation space and any
boundary conditions.

\begin{ex}\label{example1}{\rm
In the following numerical example we show
preconditioning of the diffusion-reaction problem~\eqref{model1} where $\Omega=(0,1)\times (0,1)$,
\begin{equation}\nonumber
    a(x) = \left(\begin{array}{cc}
    3.01+3\sin(x_1x_2\pi)&0\\
    0&1.01+\sin(x_1x_2\pi)
    \end{array}\right),
\end{equation}
$c=1$, $c_{\rm p}=1$, $f=10$.
The penalty coefficient $\sigma$ and $\sigma_{\rm p}$, respectively, are chosen according to~\cite{Epshteyn},
which in our setting results in
\begin{equation}\nonumber
       \sigma= \frac{3\,c_\sigma}{\vert \varepsilon_k^{\rm i}\vert}\left(       
       \frac{\lambda_{\rm max}(a^{\rm L}(x))^2}
       {\lambda_{\rm min}(a^{\rm L}(x))}+
       \frac{\lambda_{\rm max}(a^{\rm R}(x))^2}
       {\lambda_{\rm min}(a^{\rm R}(x))}\right),\qquad
    \sigma_{\rm p}= \frac{3\,c_\sigma}{\vert \varepsilon^{\rm i}_k\vert}
    \left(  \frac{\lambda_{\rm max}(a_{{\rm p}}^{\rm L}(x))^2}
       {\lambda_{\rm min}(a_{{\rm p}}^{\rm L}(x))}+
       \frac{\lambda_{\rm max}(a_{{\rm p}}^{\rm R}(x))^2}
       {\lambda_{\rm min}(a_{{\rm p}}^{\rm R}(x))}\right),
\end{equation}
for interior edges, and 
\begin{equation}\nonumber
       \sigma= \frac{6\,c_\sigma}{\vert \varepsilon^{\rm b}_k\vert}      
       \frac{\lambda_{\rm max}(a^{\rm L}(x))^2}
       {\lambda_{\rm min}(a^{\rm L}(x))},\qquad
    \sigma_{\rm p}= \frac{6\,c_\sigma}{\vert \varepsilon^{\rm b}_k\vert}
      \frac{\lambda_{\rm max}(a_{{\rm p}}^{\rm L}(x))^2}
       {\lambda_{\rm min}(a_{{\rm p}}^{\rm L}(x))},
\end{equation}
for boundary edges, respectively, where 
$\vert \varepsilon_k\vert$ is the length of the edge $\varepsilon_k$,
$a^{\rm L}$ and $a^{\rm R}$ are diffusion coefficients
of the problem on the two respective attached elements, and
similarly, $a_{{\rm p}}^{\rm L}$ and $a_{{\rm p}}^{\rm R}$ are diffusion coefficients of the preconditioning problem
on the respective attached elements. All of the coefficient functions
are considered constant on every element in order to preserve 
exactness of quadrature formulas.
Here $\lambda_{\rm min}(a)$ and $\lambda_{\rm max}(a)$
define the minimal and maximal eigenvalues of $a$.
Constant $c_\sigma$ must be greater than one~\cite{Epshteyn}.
In our example, it is chosen as $c_\sigma=2$ or $20$.
In the following tables we can see condition numbers of 
(un)preconditioned (discontinuous) Galerkin 
method, the upper bounds to the 
condition numbers of the preconditioned matrices
obtained from Algorithms~1, and
numbers of iteration steps of CG to achieve reduction
of the residual by factor $10^{-6}$.
We use a uniform discretization with $N_1\times N_2$ subintervals.
First we show the results for the unpreconditioned
Galerkin (with continuous and piece-wise linear basis functions)
and DG methods:\\

\centerline{\begin{tabular}{c|cc|ccc|ccc}
$N_1=N_2$& $\kappa(\mathsf{A}_{\rm G})$&
 it CG  & $c_\sigma=2$ &$\kappa(\mathsf{A}_{\rm DG})$ &  it CG & $c_\sigma=20$ &$\kappa(\mathsf{A}_{\rm DG})$ &  it CG  \\
\hline
$10$ & 5.0e1 &  25   & & 4.6e3 &  213 & &4.2e4  & 367 \\
$20$ & 2.1e2 &  66   & & 1.9e4 & 519  & & 1.7e5 & 1144 \\ 
$30$ & 4.8e2 &   103  & & 4.2e4 & 990  & & 3.9e5 & 1997 \\
$40$ & 8.5e2 &  187   & & 7.6e4 &  1422 & & 7.0e5 & 3314 
\end{tabular}
 }

For two different preconditioning with data $a_{\rm p,1}$ or
$a_{\rm p,2}$ 
\begin{equation}\nonumber
 a_{\rm p,1}(x) = \left(\begin{array}{cc}
    1&0\\    0&1
    \end{array}\right),\qquad
    a_{\rm p,2}(x) = \left(\begin{array}{cc}
    3&0\\    0&1
    \end{array}\right),
\end{equation}
we obtain the following results of the Galerkin discretization
with continuous basis functions:\\

\centerline{\begin{tabular}{c|cccc|cccc}
$N_1=N_2$& $a_{\rm p,1}$ &$\kappa(\mathsf{P^{-1}_{\rm G}A}_{\rm G})$ & $\frac{(\gamma_{\rm max})_{N_{\rm dof}}}
{(\gamma_{\rm min})_{1}}$ 
& it PCG & $a_{\rm p,2}$& $\kappa(\mathsf{P^{-1}_{\rm G}A}_{\rm G})$ & $\frac{(\gamma_{\rm max})_{N_{\rm dof}}}
{(\gamma_{\rm min})_{1}}$ & it PCG 
 \\
\hline
$10$ & &  4.5 & 6.0  & 9 &  &  1.9  & 2.0 & 5   \\
$20$ & &  5.3 & 6.0  & 9  &  & 2.0   & 2.0 &  5 \\
$30$ & &  5.6 &  6.0 & 10 & & 2.0  & 2.0 &  5 \\
$40$ & & 5.7  &  6.0 & 10 &  &   2.0 & 2.0 &   5
\end{tabular}
}
and two sets of results for the DG discretization
for $c_\sigma=2$: \\

\centerline{\begin{tabular}{c|cccc|cccc}
$N_1=N_2$&  $a_{\rm p,1}$& $\kappa(\mathsf{P^{-1}_{\rm DG}A}_{\rm DG})$ & $\frac{(\gamma_{\rm max})_{N_{\rm dof}}}
{(\gamma_{\rm min})_{1}}$ & it PCG 
&  $a_{\rm p,2}$&$\kappa(\mathsf{P^{-1}_{\rm DG}A}_{\rm DG})$ & $\frac{(\gamma_{\rm max})_{N_{\rm dof}}}
{(\gamma_{\rm min})_{1}}$ & it PCG\\
\hline
$10$ &  & 16.2 &  31.9 & 24  & & 2.0 & 2.0 &  5  \\
$20$ &  & 19.2 &  31,9 &  25 & &2,0  & 2.0 &  5  \\
$30$ &  & 20.2 & 31.9  &  30 & & 2.0 & 2.0 &  5  \\
$40$ &  & 20.8 &  31.9 &  30 & & 2.0 & 2.0 &  5  
\end{tabular}
}

and for $c_\sigma=20$:\\

\centerline{\begin{tabular}{c|cccc|cccc}
$N_1=N_2$&  $a_{\rm p,1}$& $\kappa(\mathsf{P^{-1}_{\rm DG}A}_{\rm DG})$ & $\frac{(\gamma_{\rm max})_{N_{\rm dof}}}
{(\gamma_{\rm min})_{1}}$ & it PCG 
&  $a_{\rm p,2}$&$\kappa(\mathsf{P^{-1}_{\rm DG}A}_{\rm DG})$ & $\frac{(\gamma_{\rm max})_{N_{\rm dof}}}
{(\gamma_{\rm min})_{1}}$ & it PCG\\
\hline
$10$ &  & 14.4 & 18.7  &  21   & & 2.0 & 2.0  &  5  \\
$20$ &  & 16.5 & 18.7  &  23   & & 2.0 &  2.0 &  5  \\
$30$ &  & 17.1 & 18.7  &  27   & & 2.0 & 2.0  &  5  \\
$40$ &  & 17.4 &  18.7 &  27   & & 2.0 & 2.0  & 5   
\end{tabular}
}

respectively.  
We can see that for both unpreconditioned Galerkin and DG methods the numbers of CG steps increase with an increasing problem size.
In contrast, the condition numbers and the 
numbers of CG steps stagnate for the 
preconditioned problems.
Moreover, the condition numbers are reliably and quite accurately
estimated by the ratios $\frac{(\gamma_{\rm max})_{N_{\rm dof}}}
{(\gamma_{\rm min})_{1}}$.
We can also notice that a higher penalty coefficient leads to
the larger condition number and number of CG steps for DG method
but not for the preconditioned DG method.
And, of course, the closer the reference data are to the 
original problem, the better preconditioner is obtained.
}\end{ex}

\begin{ex}\label{example2}{\rm
In this example we show approximation quality of the guaranteed
two-sided bounds to the eigenvalues
of the preconditioned problems. For the Galerkin discretization
(with continuous basis functions), 
the matrices $\mathsf{A}_k$ and $\mathsf{P}_k$ correspond to the element integrals (see~\cite{LadeckyPZ2020,PuLa2021}), while for the DG discretization they correspond to the edge
integrals plus one thirds of the attached elements'
integrals, as defined in~\eqref{A_terms} and~\eqref{P_terms}.
We consider the same setting as in Example~\ref{example1}, $N_1=N_2=10$, $c_\sigma=2$, $f=10$. We present the 
sorted eigenvalues and their lower and upper bounds 
$\gamma_{\rm min}$ and $\gamma_{\rm max}$, respectively,
for three different test problems, where $a,a_{\rm p}:\Omega\to\mathbb{R}^{2\times 2}$ and $c,c_{\rm p}:\Omega\to\mathbb{R}$
are defined in the following table:\\

\centerline{\begin{tabular}{c|cccc}
 & $a$ & $a_{\rm p}$  &  $c$  & $c_{\rm p}$\\
  \hline
 Test~1 & $(1+\sin(x_1x_2\pi))\,{\rm diag}(3,1)+\frac{1}{10} I$  &  ${\rm diag}(3,1)$  &  1  & 1\\
 Test~2  &  $(1+\sin(x_1x_2\pi))\,{\rm diag}(3,1)+\frac{1}{10} I$ &  ${\rm diag}(3,1)$  &  0  & 0\\
 Test~3  &  $I$ ($x_1<0.5$),\; $5I$ \; ($x_1\ge 0.5$)  &  $I$   &  0  & 0
\end{tabular}
}

The resulting spectra and their bounds 
$\gamma_{\rm min},\gamma_{\rm max}\in\mathbb{R}^{N_{\rm dof}}$
are graphically displayed in Figure~\ref{fig1}.
In the DG method, the bounds are obtained from Algorithm~1.
For the Galerkin method the bounds are obtained analogically,
see e.g.~\cite{LadeckyPZ2020,PuLa2021}.

\begin{figure}[h]
\centering
\includegraphics[width=\textwidth]{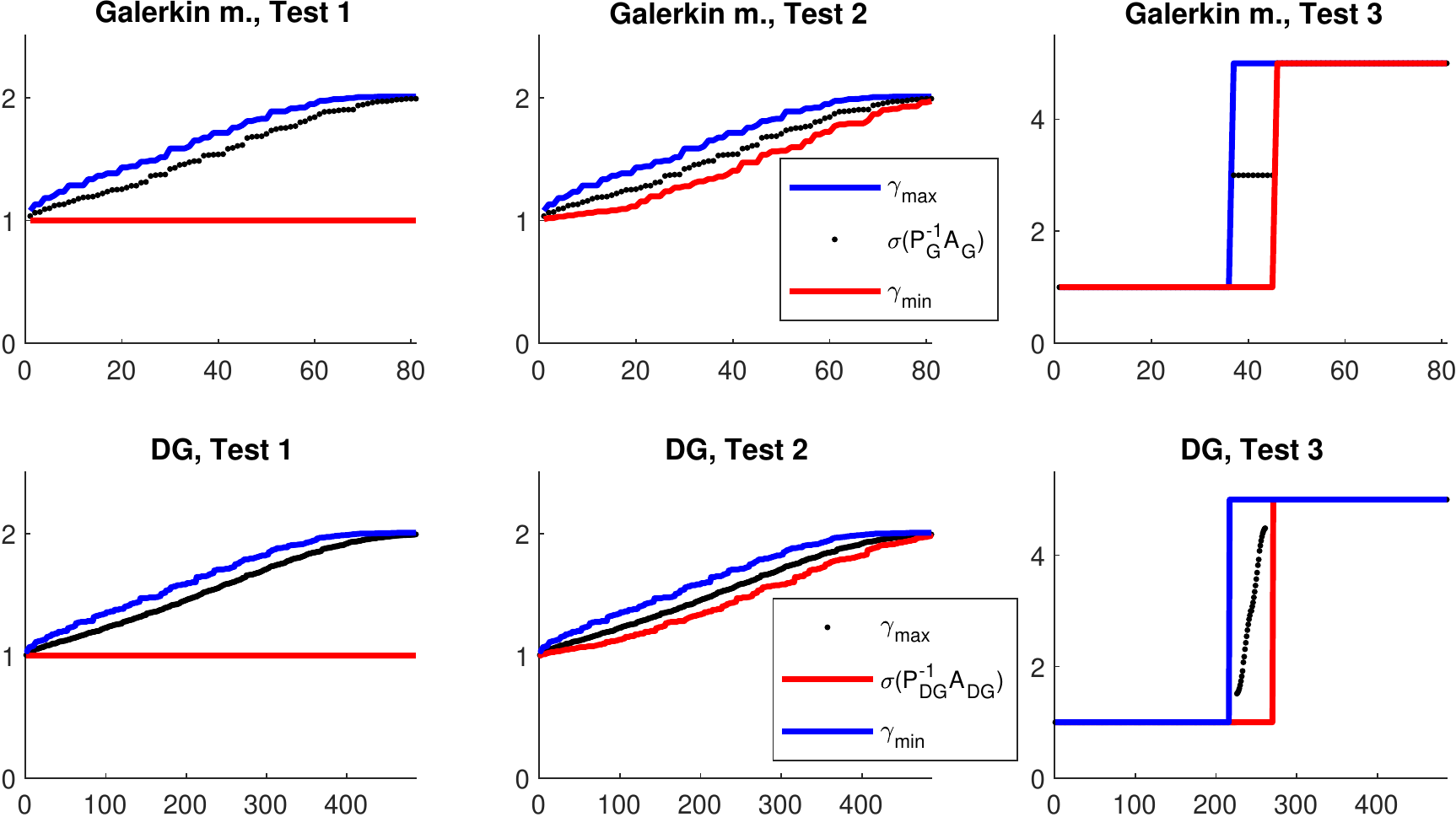}   
    \caption{Spectra of $\mathsf{P}^{-1}\mathsf{A}$ (black dots)
    and their upper (blue lines) and
    lower (red lines) bounds for Galerkin (first line of figures) and DG discretization (second line of figures) for test problems 
    (Tests~1, 2, 3) defined in Example~\ref{example2}.}
   \label{fig1}
\end{figure}
}\end{ex}

\section{Preconditioned Galerkin method and convection-\\
diffusion-reaction problem}\label{secG}

In this section, as an example of a non-symmetric problem,
we consider a scalar convection-diffusion-reaction
equation~\cite{DolejsiFeistauer2015}. The model problem reads 
\begin{equation}\label{model2}
-\nabla\cdot\left(a(x)\nabla u(x)\right)+b(x)\cdot \nabla u(x)+c(x)u(x)=f(x),\;\; x\in \Omega
\end{equation}
where $u(x)=0$ for $x\in\partial\Omega$.
The coefficient
$a:\Omega \to {\mathbb R}^{2\times 2}$ is essentially bounded
and uniformly s.p.d.~in $\Omega$,
$c$ is nonnegative and essentially bounded in $\Omega$, 
$b\in W^{1,\infty}(\Omega)^2$, and
$f\in L^2(\Omega)$.
We assume $c-\frac{1}{2}\nabla\cdot b\ge 0$ in $\Omega$ that guarantees the ellipticity of the problem (\ref{model2}).
We apply the same notation and assumptions on the mesh as in Section~\ref{secG}.
Moreover, we assume for simplicity 
that the problem (\ref{model2}) is not convection-dominated;
i.e., we assume that the P\' eclet number ${\rm Pe}=\frac{\Vert b\Vert h_\tau}{\Vert a\Vert}$ is sufficiently small, where $h_\tau=\mbox{diam}(\tau)$. Otherwise, the discretization of the convective term needs to be stabilized. For the overview of dicretization methods for the convection dominated problems and the corresponding stabilizations, see e.g.~\cite{RST}. Although the stabilization 
would not complicate our preconditioning method,
we do not consider it here in order to focus on the explanation of the most
important ideas.
For numerical solution let us define the space of 
continuous and element-wise linear functions $V_h$ satisfying the boundary condition.
Then the discretized weak form of the equation \eqref{model2} reads to find $u\in V_h$ such that 
\begin{equation}\nonumber
\mathcal{A}(u,v)_h=(f,v),\quad v\in V_h,
\end{equation}
where
\begin{eqnarray}
\mathcal{A}(u,v)_h &=&A(u,v)_h+B(u,v)_h+C(u,v)_h,
\nonumber
\end{eqnarray}
where
\begin{equation}\nonumber
A(u,v)_h=\int_{\Omega}a\nabla u\cdot\nabla v\,
{\rm d}x,\quad B(u,v)_h=\int_\Omega
b\cdot\nabla uv\,{\rm d}x,\quad
C(u,v)_h=\int_{\Omega}c\, u v\,
{\rm d}x.
\end{equation}
Let us assume that the FE basis functions are defined by
$N_{\rm dof}$ DOFs.
The system of linear equations reads
\begin{equation}\nonumber
\mathsf{(A+B)u}=\mathsf{f}
\end{equation}
where $\mathsf{A,B}\in \mathbb{R}^{N_{\rm dof}\times N_{\rm dof}}$
and $\mathsf{f}\in \mathbb{R}^{N_{\rm dof}}$.
Matrices $\mathsf{A}$ and $\mathsf{B}$ are 
the symmetric and skew-symmetric parts of the system matrix, respectively.
Denoting the FE basis functions by $\varphi_j$,
$j=1,\dots,N_{\rm dof}$, and from
$u=\sum_{j=1}^{N_{\rm dof}}\mathsf{u}_j\varphi_j$, we have 
\begin{equation}\nonumber
\mathsf{A}_{ij}+\mathsf{B}_{ij}=\mathcal{A}(\varphi_j,\varphi_i)_h,
\quad \mathsf{f}_i=(f,\varphi_i), \quad i,j=1,\dots, N_{\rm dof}.
\end{equation}
The s.p.d.~preconditioning matrix $\mathsf P$ is obtained in the same way as $\mathsf A$ but for the reference data
$a_{\rm p}$, $c_{\rm p}$ and $b_{\rm p}=(0,0)^T$.
The matrices $\mathsf{A}_k$, $\mathsf{B}_k$ and $\mathsf{P}_k$ 
from Section~\ref{NonSymMat} are here associated with an element $\tau_k$.
Therefore, the notation from Section~\ref{NonSymMat}
matches to the notation of this section
when $N=N_{\rm dof}$ and $N_{\rm loc}=N_\tau$.
For $k=1,\dots,N_\tau$ and $i,j=1,\dots,N_{\rm dof}$ we obtain
\begin{eqnarray}
    (\mathsf{A}_k)_{ij}&=&\int_{\tau_k}
    a\nabla \varphi_j\cdot\nabla\varphi_i\,\dd x+
      \int_{\tau_k}    c\, \varphi_j\varphi_i\,\dd x +
       \frac{1}{2}\int_{\tau_k}    b\cdot \nabla\varphi_j\varphi_i
    +b\cdot \nabla\varphi_i\varphi_j\,\dd x
             \label{AN_terms}\label{AN_terms}\\
        (\mathsf{B}_k)_{ij}&=&
    \frac{1}{2}\int_{\tau_k}    b\cdot \nabla\varphi_j\varphi_i
    -b\cdot \nabla\varphi_i\varphi_j\,\dd x \label{BN_terms}
\end{eqnarray}
and
\begin{equation}\label{PN_terms}
    (\mathsf{P}_k)_{ij}=\int_{\tau_k}
    a_{\rm p}\nabla \varphi_j\cdot\nabla\varphi_i\,\dd x+     
        \int_{\tau_k}    c_{\rm p}\, \varphi_j\varphi_i\,\dd x.
\end{equation}
We might notice that the contribution of the convective term into the system matrix will be pure skew-symmetric if $\nabla \cdot b=0$. However,
its individual element-wise contributions can have non-zero
symmetric part which comes from
element boundary integrals. Therefore, when composing the
system matrix split into the symmetric and
skew-symmetric parts $\mathsf{A}$ and $\mathsf{B}$, respectively,
we can take this into account and ignore the contribution coming from the convective term in composition of $\mathsf{A}$ 
if $\nabla \cdot b=0$.

Now, we can introduce the algorithm for composing the matrices $\mathsf{A,B,P}$ and bounds to the spectrum 
of $\mathsf{P}^{-1}(\mathsf{A+B})$ according to Lemma~\ref{lemNonsym1}.

{\bf Algorithm~2.} (Constructing $\mathsf{A},\mathsf{B},\mathsf{P}\in\mathbb{R}^{N_{\rm dof}\times N_{\rm dof}}$ and 
spectral bounds $\alpha_{\rm min},\alpha_{\rm max},\beta_{\rm max}\in\mathbb{R}$.)
\begin{itemize}
    \item[1.]  
    Set $\alpha_{\rm min}:=\infty$, $\alpha_{\rm max}=0$, $\beta_{\rm max}=0$.
    \item[2.]
    For every element $\tau_k$, $k=1,\dots,N_\tau$: \\
    Set $\widetilde{\mathsf{A}}_k,\widetilde{\mathsf{B}}_k,\widetilde{\mathsf{P}}_k \in 
    \mathbb{R}^{3\times 3}$ according to~\eqref{AN_terms}, \eqref{BN_terms}, and~\eqref{PN_terms}, respectively.\\
    Add matrices $\widetilde{\mathsf{A}}_k,\widetilde{\mathsf{B}}_k,
    \widetilde{\mathsf{P}}_k$ to elements at positions given by indices $j_1,j_2,j_3$ in $\mathsf{A,B}$, and $\mathsf{P}$, respectively.\\
    Compute generalized eigenvalues of
    $\widetilde{\mathsf{A}}_k\mathsf{v}=\lambda\widetilde{\mathsf{P}}_k\mathsf{v}$ and 
    $\widetilde{\mathsf{B}}_k\mathsf{v}=\lambda\widetilde{\mathsf{P}}_k\mathsf{v}$    and set \\
    $\alpha_1:=\lambda_{\rm ker,min}(\widetilde{\mathsf{P}}_k^{-1}\widetilde{\mathsf{A}}_k)  $,\\
    $\alpha_3:=\lambda_{\rm ker,max}(\widetilde{\mathsf{P}}_k^{-1}\widetilde{\mathsf{A}}_k)  $,\\
    $\beta_3:=\lambda_{\rm ker,im,max}(\widetilde{\mathsf{P}}_k^{-1}\widetilde{\mathsf{B}}_k)  $.\\  
    For $s=1,2,3$: \\
    $\alpha_{\rm min}:=\min\{\alpha_1,\alpha_{\rm min} \}$\\
    $\alpha_{\rm max}:=\max\{\alpha_3,\alpha_{\rm max}\}$\\
    $\beta_{\rm max}:=\max\{\beta_3, \beta_{\rm max}\}$. 
\end{itemize}

In the following numerical example we consider 
a convection-diffusion-reaction problem preconditioned 
with an s.p.d.~matrix.

\begin{ex}{\rm
We consider equation~\eqref{model2} and its preconditioning
problem with $\Omega=(0,1)\times (0,1)$,
\begin{equation}\nonumber
    a(x) = \left(\begin{array}{cc}
    20-2x_2&0\\
    0&3-2x_1
    \end{array}\right),\qquad
     a_{\rm p,1}(x) = \left(\begin{array}{cc}
    1&0\\    0&1
    \end{array}\right),
    \qquad
     a_{\rm p,2}(x) = \left(\begin{array}{cc}
    19&0\\    0&2
    \end{array}\right),
\end{equation}
$c=10$, $c_{\rm p}=10$, $b=10\, (-x_2,x_1)^T$, $b_{\rm p}=(0,0)^T$, $f=10$.
We obtain systems of linear equations with 
symmetric and skew-symmetric parts $\mathsf{A}$ and 
$\mathsf{B}$, respectively, and the s.p.d.~preconditioning matrix 
$\mathsf{P}$. We obtain the following condition numbers and 
steps of (preconditioned) GMRES with no restart~\cite{
LiesenS,SaadBook,vanderVorst}
with the tolerance for relative residual norms $10^{-8}$.
The spectral bounds $\alpha_{\rm min}$ and $\alpha_{\rm max}$
and $\beta_{\rm max}$ to the eigenvalues of $\mathsf{P}^{-1}\mathsf{A}$ and  $\mathsf{P}^{-1}\mathsf{B}$, respectively,
are obtained from Algorithm~2. In this example,
matrix $\mathsf P$ and $\mathsf A$ are s.p.d., thus we can also 
obtain the upper bounds $\frac{\alpha_{\rm max}}{\alpha_{\rm min}}$ to the condition numbers of 
$\mathsf{P}^{-1}\mathsf{A}$.
First we present the results for the unpreconditioned problem:\\

\centerline{\begin{tabular}{c|ccc}
$N_1=N_2$& $\kappa(\mathsf{A})$& $\lambda_{\rm im,max}(\mathsf{B})$  &it GMRES \\
\hline
10&  3.9e1&  3.8 &  44 \\
30& 3.7e2 &  1.6 &  137 \\
50& 1.0e3 &  1.0 &  232 \\
70& 2.0e3 & 0.8  &   329
 \end{tabular}
 }

Then we show the results for the preconditioner with $a_{\rm p,1}$: \\

\centerline{\begin{tabular}{c|ccccc}
$N_1=N_2$&  $\kappa(\mathsf{P^{-1}A})$ 
& $\frac{\alpha_{\rm max}}{\alpha_{\rm min}}$
& $\lambda_{\rm im,max}(\mathsf{P}^{-1}\mathsf{B})$ &
$\beta_{\rm max}$ & it pGMRES  \\
\hline
10  &  8.0 &  19.8 & 2.1 & 6.4 & 25  \\
30  & 11.9  & 20.0  & 2.2  & 6.6& 31\\
50  &  13.6 & 20.0  &  2.2 & 6.6 & 32\\
70  & 14.5  &  20.0 & 2.2  &6.7 & 33
 \end{tabular}
 }

and for the preconditioner with $a_{\rm p,2}$:\\

\centerline{\begin{tabular}{c|ccccc}
$N_1=N_2$&  $\kappa(\mathsf{P^{-1}A})$ 
& $\frac{\alpha_{\rm max}}{\alpha_{\rm min}}$
& $\lambda_{\rm im,max}(\mathsf{P}^{-1}\mathsf{B})$ &
$\beta_{\rm max}$ & it pGMRES    \\
\hline
10&  1.4   & 2.7 & 0.36 & 3.4  &  11  \\
30&  1.8  & 2.9  &  0.40 & 3.5 &  13 \\
50&   2.1 &  2.9 & 0.41 & 3.5 &  14 \\
70&  2.2  & 3.0  & 0.41 &  3.5 &   14
 \end{tabular}
 }
In this example, we can notice that while for the refining mesh
the condition numbers of the real parts of the systems matrices grow
quickly, the condition numbers of $\mathsf{P}^{-1}\mathsf{A}$
stagnate and the extreme eigenvalue of $\mathsf{P}^{-1}\mathsf{B}$ stays bounded. The same is true for the numbers of steps of the 
GMRES method.
Again, the preconditioner is more efficient when the reference data 
are closer to the data of the underlying problem.
}\end{ex}

\section{Conclusion}

The main goal of this paper is to introduce 
a recently developed type of preconditioning to DG discretization method and to convection-diffusion-reaction
problems which can lead to non-symmetric matrices of the resulting systems
of linear equations. Numerical solution is considered to be calculated
using FEM, but many other discretization methods can be used.
While the DG method is often used for convection-diffusion problems such as Navier-Stokes equations, we consider these two special 
cases (DG and non-symmetry) separately, in order to illuminate
their particular features in an accessible way.
The basic idea of the preconditioning lies 
in construction a preconditioning matrix which is obtained in a similar manner as the original matrix but for different (reference) data. The construction also yields spectral bounds
for the resulting matrix.

It is important to note that the preconditioning matrix 
is in fact a matrix of some problem of a similar 
complexity as the original one.
Therefore, using it as a preconditioner may be
efficient only in some special cases.
For example, if the domain and the mesh have a regular structure, such as pixel or voxel based (\cite{LadeckyLFPPJZ2023,LeuteLFJPZJP2022,ThieleR2022}),
then inverting the matrix of the preconditioning problem with constant data can be extremely cheap using the fast discrete Fourier
transform. If one needs to solve many problems of a similar nature,
it can be worth to factorize the preconditioning matrix once and then
use it repeatedly. 
Therefore, we do not present the solution time in our examples,
because it strongly depends on the problem and on the implementation.
As an inherent part of the method, guaranteed lower and upper bounds to every eigenvalue
of the resulting matrix (in case of symmetric matrices) 
or at least to the spectrum as a whole (in case of non-symmetric
matrices) are obtained. Having an information about the eigenvalue distribution
can help to evaluate the effectivity of the preconditioner 
in advance; see e.g.~\cite{G}.
Moreover, having a good preconditioner 
with guaranteed tight spectral bounds allows calculating a guaranteed 
energy norm of the error just from a current residual.

As an open question we consider a possibility to obtain 
even more accurate bounds to complex eigenvalues of preconditioned sum
of symmetric and skew-symmetric matrices than provided by 
Lemma~\ref{lemNonsym1}. We hope for some analogy to the bounds 
mentioned in Remark~\ref{remG}, which is backed by numerical experiments. However,
the counter-example in Remark~\ref{rem_counter} shows that 
some modification of the statement would be necessary.

{\bf Acknowledgement.}
LG, ML, IP, and MV acknowledge funding by the European Regional Development Fund (Centre of Advanced Applied Sciences -- CAAS, CZ 02.1.01/0.0/0.0/16\_019/0000778), by  the Czech Science Foundation (projects No.~19-26143X (JZ), No.~20-14736S (ML, MV), and No.~22-35755K (LG)), 
the Student Grant Competition of the Czech Technical University in Prague
(project No.~SGS23/002/OHK1/1T/11 (LG)).
None of the authors have a conflict of interest.

\end{document}